\def\qed{\ifmmode\square\else\nolinebreak\hfill
$\Box$\fi\par\vskip12pt}
\newtheorem{thm}{Theorem}[section]
\newtheorem{lemma}[thm]{Lemma}
\newtheorem{corollary}[thm]{Corollary}
\newtheorem{proposition}[thm]{Proposition}
\numberwithin{equation}{section}
\numberwithin{thm}{section}
\theoremstyle{definition}
\newtheorem{remark}[thm]{Remark}
\newtheorem{example}[thm]{Example}
\newcommand{\bF}{\mathbb F}
\newcommand{\bZ}{\mathbb Z}
\definecolor{Purple}{rgb}{0.5,0,0.5}
\renewcommand{\>}{\rangle}
\begin{document}
\pagestyle{plain}
\parindent=19pt
\begin{titlepage}

\title{New constructions of signed difference sets}

\begin{center}
\author{Zhiwen He, Tingting Chen$^{*}$ and Gennian Ge
}\end{center}
\address{Zhejiang Lab, Hangzhou 311100, China}
\email{ zhiwenhe94@163.com }

\address{Zhejiang Lab, Hangzhou 311100, China}
\email{ ttchenxu@mail.ustc.edu.cn }

\address{School of Mathematics Sciences, Capital Normal University, Beijing 100048, China}
\email{ gnge@zju.edu.cn }
\begin{abstract}
    Signed difference sets have interesting applications in communications and coding theory. A $(v,k,\lambda)$-difference set in a finite group $G$ of order $v$ is a subset $D$ of $G$ with $k$ distinct elements such that the expressions $xy^{-1}$ for all distinct two elements $x,y\in D$, represent each non-identity element in $G$ exactly $\lambda$ times. A $(v,k,\lambda)$-signed difference set is a generalization of a $(v,k,\lambda)$-difference set $D$, which satisfies all properties of $D$, but has a sign for each element in $D$. We will show some new existence results for signed difference sets by using partial difference sets, product methods, and cyclotomic classes.
\end{abstract}

\keywords{Signed difference set, two-level autocorrelation, partial difference set, cyclotomic class\\
{\bf  Mathematics Subject Classification (2020) 05E10 05E16 05E30 11T22 94A05}\\
$^*$Correspondence author}

\maketitle

\section{Introduction}

Difference sets and their generalizations have vast applications  in communication and radar
systems \cite{LiuS2018}, cryptography \cite{Cusick2015} and coding theory \cite{Ding}. In particular, spectrally constrained sequences with low cross-correlations can be used in cognitive radar \cite{Tsai2011}, and the spectral null constraints can be supported by difference sets \cite{YeZ2022}.

Signed difference set is a generalization of the difference set, which was first introduced by Gordon \cite{Gordon}. The formal definition is as follows. Let $G$ be a finite group of order $v$ and $\bZ[G]$ denote the group ring of $G$ over $\bZ$. Let $D=\sum_{i=1}^ks_id_i\in \bZ[G]$ with $s_i\in \{-1,1\}$, $1\leq i\leq k$. If the signed set $D$ satisfies 
\begin{equation}\label{Eqn_DS}
DD^{-1}=\lambda G+n\cdot 0_G,
\end{equation}
where $n=k-\lambda$, we call $D$ a $(v,k,\lambda)${\it -signed difference set} (SDS). Denote $P,N$ as sets such that $D=P-N$, i.e., $P=\{d_i:s_i=1\text{ for all } i\in[k]\}$ and $N=\{d_i:s_i=-1\text{ for all } i\in[k]\}$. If $N=\emptyset$, the subset $D$ that satisfies Eq. ~\ref{Eqn_DS} is a $(v,k,\lambda)${\it -difference set} (DS). If $N=\emptyset$ and $D$ satisfies 
\begin{equation}\label{Eqn_PDS}
DD^{-1}=\lambda D+\mu(G-D-1_G)+k\cdot 0_G,
\end{equation} 
we call $D$ a {\it $(v,k,\lambda,\mu)$-partial difference set} (PDS). In particular, when the identity $0_G\notin D$ and $D^{(-1)}=D$, we call $D$ is {\it regular}. A {\it Paley PDS} is a PDS with parameters $(v,\frac{v-1}{2},\frac{v-5}{4},\frac{v-1}{4})$. It is easy to see that when $\lambda=\mu$, $D$ is exactly a $(v,k,\lambda)$-DS. PDSs and DSs have close connections with combinatorial structures, as well as sequences and codes. See \cite{Tao,Ding,Ye} for instance. Their research is more advanced than that on SDSs, as demonstrated by notable papers such as \cite{Hall} by Hall and \cite{S.L.MA} by Ma.

An SDS with $\lambda=0$ is the concept that we call weighing matrix. A {\it weighing matrix $W(v,k)$} is a $v\times v$ matrix with entries in $\{-1,0,1\}$ such that 
\[
WW^T=kI,
\]
for some positive integer $k$, where $W^T$ is the transpose of $W$ and $I$ is the identity matrix with suitable size. Let $G$ be a group of order $v$. If $W=(W_{g_i,g_j})_{0\leq i,j\leq v-1}$ indexed with the elements of $G=\{g_0,g_1,...,g_{v-1}\}$ and $W_{g_ig,g_jg}=W_{g_i,g_j}$ for any $g_i,g_j,g\in G$, $W$ is said to be {\it $G$-invariant}. Note that a $G$-invariant matrix can be identified solely by its first row. When $G$ is a cyclic group, we call $W$ a {\it circulant} weighing matrix. When $\lambda=0$, a $(v,k,0)$-SDS corresponds to a $G$-invariant weighing matrix $W(v,k)$ via the formula below, and vice versa.
\begin{equation}\label{Eqn_W1j}
W_{g_0,g}=\begin{cases}1&\textup{ if }g\in P,\\
0&\textup{ if }g\notin P\cup N,\\
-1&\textup{ if }g\in N.
\end{cases}
\end{equation}
The existence of group invariant weighing matrices has been extensively studied, see for example \cite{ArasuKT2,Tan,LeKH}.

Consider an SDS $D=P-N$ with parameters $(v,k,\lambda)$ over a group $G$, then it is easy to see that $-D=N-P$ is also a $(v,k,\lambda)$-SDS.  Without loss of generality, we consider $D$ and $-D$ of the same type. By counting all differences of positive and negative signs in Eq. ~\ref{Eqn_DS}, we can get that 
\begin{align}\label{ness1}
    |P|(|P|-1)+|N|(|N|-1)-2|P|\cdot|N|=(|P|-|N|)^2-(|P|+|N|)=\lambda(v-1).
\end{align}
Eq.~\ref{ness1} implies a necessary condition for SDS that $\lambda\geq-1$ and $k+\lambda(v-1)=(|P|-|N|)^2$ must be a perfect square, which is similar to weighing matrices that are requested for $k=(|P|-|N|)^2$ to be a perfect square. The websites in \cite{D.M,A.E} contain online databases with the existence results and parameters for PDSs, DSs, circulant weighing matrices and SDSs. Constructing SDSs with new parameters on different group structures to enrich the parameter sets would be an interesting endeavor.


Using the similar notations in Eq. ~\ref{Eqn_W1j}, we can define a ternary sequence $S_D$ of period $v$ from a $(v,k,\lambda)$-SDS $D=P-N$ as follows: $S_D=(s_0,s_1,\ldots,s_{v-1})$, where for each $i\in[0,v-1]$ $s_i=1,-1,0$ if $g_i\in P,N,G-(P\cup N)$ respectively. The {\it periodic autocorrelation $C_S(\tau)$} of $S_D$ at shift $\tau$ is defined by 
\[
C_S(\tau)=\sum_{i=0}^{v-1}s_{i+\tau}s_i^*,\ 0\leq \tau<v,
\]
where $s_i^*$ is the complex conjugate of $s_i$ and $i+\tau$ is reduced by modulo $v$. A sequence $S$ of length $v$ is called a {\it two-level sequence} if its all out-of-phase periodic autocorrelation values equal $-1$, i.e., $C_S(\tau)=-1$ for any $0<\tau<v$. The search for new sequences with two-level autocorrelation has been a fascinating problem for decades. Due to their good correlation properties, these sequences are widely used in communications and cryptography. See \cite{Helleseth,Xiang,Wang} and references therein. In fact, the autocorrelation values  $C_S(\tau)$ are associated with the parameter $\lambda$, we will show this in the next sections.

In \cite{Gordon}, Gordon gave some constructions for SDSs from DSs and PDSs. By utilizing quadratic residues and characters, Gordon gave two creative constructions. And in \cite{Jose}, the authors investigated the construction of DSs in the additive group over a finite field by using the tools of cyclotomic classes.  Motivated by their methods, in this work, we construct SDSs by Paley PDSs and cyclotomic classes over finite fields. Our main contributions are listed below.

\begin{itemize}
    \item[(1)] We show that the signed set $D$ is an SDS with $\lambda=-1$ if and only if $S_D$ has two-level autocorrelations.
    \item[(2)] Consider $v$ as an odd integer. By using PDSs, we give some constructions for SDSs with parameters $(v,v-1,-1)$ and $(243,242,161)$, where $v$ is a prime power and $v\equiv 1\pmod{4}$  or $v=n^4$ or $9n^4$ for $n$ being an odd integer. These parameters are more general than those in the construction of $(v,v-1,-1)$-SDS given by Gordon, which needs that $v$ is an odd prime number.
    \item[(3)] By using product construction and character theory on $3$-groups, we obtain an infinite class of SDSs with parameters $(3^{2m+1},3^{2m}+1,1)$ for any positive even integer $m$.
    \item[(4)] We classify all fourth-order cyclotomic SDSs, and obtain  $(v,k,\lambda)$-SDS with $\lambda=v-4$, $\frac{v-9}{4}$, $\frac{v-1}{4},-1$, $\frac{f-1}{4}$, $\frac{f-5}{4}$, $\frac{9f-9}{4}$, $\frac{f-3}{4}$. In particular,  when $k=f+1=\frac{v+3}{4}$ and $\lambda=\frac{f-3}{4}=\frac{v-13}{16}$, we can give an alternative proof of \cite[Theorem 3.2]{Gordon}.
\end{itemize}

The rest of this paper is organized as follows. In Section~\ref{sec_pre}, we give some definitions, notations, and results on group rings, character theory, and sequences. In Section~\ref{Sec_PDS}, we construct SDSs by using PDSs. Based on these SDSs, we give a construction of SDSs by using product methods and character theory in Section~\ref{sec_3group}. Details of constructions of fourth-order cyclotomic SDSs are given in Section~\ref{sec_cyc}. Finally, a brief conclusion is given in Section~\ref{sec_con}.
\end{titlepage}

\section{Preliminaries}\label{sec_pre}
In this section, we will give some definitions and notations for group rings and character theory. We also give the connections between SDSs and two-level autocorrelation sequences.

\subsection{Group rings and characters}

Let $G$ be a finite group. The group ring $\bZ[G]$ is defined as a collection of formal sums, where the elements are taken from $G$ and the coefficients are integers from $\bZ$. The operations ``$+$'' and ``$\cdot$'' of $\bZ[G]$ are given by 
\begin{equation}
\sum_{g\in G}a_g g+\sum_{g\in G}b_g g=\sum_{g\in G}(a_g+b_g)g
\end{equation} 
and 
\begin{equation}
(\sum_{g\in G}a_g g)\cdot(\sum_{h \in G}b_h h)=\sum_{g,h\in G}a_gb_h(gh).
\end{equation} 
Numerous combinatorial structures are extensively explored within the framework of a group ring. It becomes conventional to abuse the notation $S$ as a subset of $G$ and the corresponding element $\sum_{s\in S}s$ in $\bZ[G]$ at the same time. Furthermore, within the following context, we will explore character theory, a powerful tool that can greatly simplify calculations. A character of a finite abelian group is defined as a homomorphism from $G$ to the multiplicative group of complex numbers with absolute value of $1$. The principal character, denoted as  $\chi_0$, is such that $\chi_0(x)=1$ for any $x\in G$. For a subset $S$ of $G$, we write $\chi(S)=\sum_{s\in S}\chi(s)$. 

We need the following well-known inverse formula on group rings:
\begin{lemma}\label{Eqn_chi}
Let $A=\sum_{g\in G}a_gg$ be an element of the group ring $\bZ[G]$ for a finite abelian group $G$. Then the coefficients $a_g$'s of $A$ can be computed explicitly by
\[
a_g=\frac{1}{|G|}\sum_{\chi\in\hat{G}}\chi(A)\chi(g^{-1}),
\]
where $\hat{G}$ denotes the character group of $G$. In particular, if $A,B\in \bZ[G]$ satisfy $\chi(A)=\chi(B)$ for all characters $\chi\in\hat{G}$, then $A=B$.
\end{lemma}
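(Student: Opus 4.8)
The plan is to verify the inversion formula by direct substitution, the only nontrivial input being the orthogonality relations for characters of a finite abelian group. First I would record the dual (column) orthogonality relation
\[
\sum_{\chi\in\hat{G}}\chi(x)=\begin{cases}|G|&\text{if }x=1_G,\\0&\text{otherwise},\end{cases}
\]
valid for every $x\in G$. When $x=1_G$ this is immediate, since $\chi(1_G)=1$ and $|\hat{G}|=|G|$ for a finite abelian group. When $x\neq 1_G$, I would choose a character $\psi$ with $\psi(x)\neq 1$ (the characters of a finite abelian group separate its points) and observe that multiplying the sum $S=\sum_{\chi}\chi(x)$ by $\psi(x)$ merely reindexes it, because $\chi\mapsto\psi\chi$ permutes $\hat{G}$; hence $(\psi(x)-1)S=0$, forcing $S=0$.

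Granting orthogonality, I would compute the right-hand side of the claimed identity by expanding $\chi(A)=\sum_{h\in G}a_h\chi(h)$, interchanging the two finite summations, and using that each $\chi$ is a homomorphism so that $\chi(h)\chi(g^{-1})=\chi(hg^{-1})$:
\[
\frac{1}{|G|}\sum_{\chi\in\hat{G}}\chi(A)\chi(g^{-1})=\frac{1}{|G|}\sum_{h\in G}a_h\sum_{\chi\in\hat{G}}\chi(hg^{-1}).
\]
Applying the orthogonality relation to the inner sum, with $x=hg^{-1}$, annihilates every term except the one with $h=g$, where the inner sum equals $|G|$; this leaves precisely $a_g$ and establishes the explicit coefficient formula.

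For the final assertion, I would write $B=\sum_{g\in G}b_g g$ and suppose $\chi(A)=\chi(B)$ for all $\chi\in\hat{G}$. Applying the inversion formula just proved to both $A$ and $B$ yields $a_g=b_g$ for every $g\in G$, whence $A=B$.

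The only genuine obstacle is the orthogonality relation itself, which rests on the two structural facts that $|\hat{G}|=|G|$ and that the characters of $G$ separate its points; both follow from the decomposition of a finite abelian group into cyclic factors together with the explicit parametrization of characters of a cyclic group. Once orthogonality is secured, everything else is a routine interchange of finite sums, so I would expect the write-up to be short.
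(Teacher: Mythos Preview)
Your argument is correct and is the standard proof via the column orthogonality relations. The paper does not actually prove this lemma; it simply records it as a ``well-known inverse formula on group rings'' and moves on, so your write-up in fact supplies more detail than the paper does.
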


By Lemma \ref{Eqn_chi} and the definition of SDS, we get the following result.

\begin{lemma}\label{lem_charSDS}
Let $G$ be a group of order $v$ and $D\in \bZ[G]$ be a signed subset of $G$ with size $k$. Then $D$ is an SDS of $G$ if and only if $\chi(D)\overline{\chi(D)}=n$ for any non-principal character $\chi\in\hat{G}$.  
\end{lemma}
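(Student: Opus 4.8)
The plan is to push the defining identity \eqref{Eqn_DS} through the characters of $G$ and then invoke the uniqueness half of Lemma~\ref{Eqn_chi}. The single observation that powers both directions is that $\chi(D^{-1})=\overline{\chi(D)}$ for every $\chi\in\hat G$: writing $D=\sum_i s_i d_i$ with the $s_i\in\{-1,1\}$ real and each $|\chi(d_i)|=1$, we have $\chi(d_i^{-1})=\overline{\chi(d_i)}$, so $\chi(D^{-1})=\sum_i s_i\overline{\chi(d_i)}=\overline{\chi(D)}$, whence $\chi(DD^{-1})=\chi(D)\overline{\chi(D)}=|\chi(D)|^2$. I will also use the orthogonality fact that $\chi(G)=\sum_{g\in G}\chi(g)=0$ for every non-principal $\chi$, while $\chi(0_G)=1$ for all $\chi$. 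With these in hand the forward implication is immediate: if $D$ is an SDS then $DD^{-1}=\lambda G+n\cdot 0_G$ with $n=k-\lambda$, and applying a non-principal $\chi$ gives $\chi(D)\overline{\chi(D)}=\lambda\chi(G)+n\chi(0_G)=n$, as claimed.

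For the converse I would assume $\chi(D)\overline{\chi(D)}=n$ for every non-principal $\chi$, set $\lambda:=k-n$, and put $B:=\lambda G+n\cdot 0_G$; by the uniqueness clause of Lemma~\ref{Eqn_chi} it then suffices to verify $\chi(DD^{-1})=\chi(B)$ for all $\chi\in\hat G$. For non-principal $\chi$ this is instant: $\chi(DD^{-1})=\chi(D)\overline{\chi(D)}=n=\lambda\cdot 0+n=\chi(B)$.

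The one genuine obstacle is the principal character, where the two sides read $\chi_0(DD^{-1})=(|P|-|N|)^2$ and $\chi_0(B)=\lambda v+n$, so matching them amounts to the numerical identity $(|P|-|N|)^2=\lambda v+n$. Since we are trying to \emph{prove} that $D$ is an SDS, this relation must be derived rather than quoted from \eqref{ness1}, and I would obtain it by computing the coefficient of $0_G$ in $DD^{-1}=\sum_{i,j}s_is_j\,d_id_j^{-1}$ in two ways. On one hand, because the $d_i$ are distinct and $s_i^2=1$, that identity coefficient equals $\sum_i s_i^2=k$. On the other hand, the inversion formula of Lemma~\ref{Eqn_chi} together with $\chi(DD^{-1})=n$ for all $\chi\neq\chi_0$ shows that every non-identity coefficient of $DD^{-1}$ equals $c=\tfrac1v\bigl(\chi_0(DD^{-1})-n\bigr)$, so the identity coefficient also equals $\tfrac1v\bigl(\chi_0(DD^{-1})+(v-1)n\bigr)$. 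Equating the two expressions forces $\chi_0(DD^{-1})=vk-(v-1)n=\lambda v+n=\chi_0(B)$, so the principal character matches automatically. Hence $\chi(DD^{-1})=\chi(B)$ for all $\chi$, and Lemma~\ref{Eqn_chi} yields $DD^{-1}=\lambda G+n\cdot 0_G$, i.e.\ $D$ is a $(v,k,\lambda)$-SDS. The crux of the whole argument is thus not either character computation individually but this identity-coefficient bookkeeping, which is what guarantees that the hypothesis on the non-principal characters is automatically compatible with the value at $\chi_0$.
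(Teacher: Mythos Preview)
Your argument is correct and follows exactly the route the paper intends: the paper itself does not spell out a proof but simply records that the lemma ``follows from Lemma~\ref{Eqn_chi} and the definition of SDS,'' and your write-up is a careful unpacking of precisely that. The only comment is that your handling of the principal character, while correct, can be shortened: once you know the identity coefficient of $DD^{-1}$ is $k$ (from $\sum_i s_i^2=k$) and the inversion formula forces all other coefficients to share a common value $\lambda$, you already have $DD^{-1}=\lambda G+(k-\lambda)\,0_G$; evaluating at any non-principal $\chi$ then gives $k-\lambda=n$, so no separate verification of $\chi_0(DD^{-1})=\chi_0(B)$ is needed.
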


Let $\bF_q$ be a finite field, where $q=p^m$ and $p$ is a prime. Let $w$ be the primitive element of $\bF_q$. Then $\{1,w,...,w^{m-1}\}$ forms a polynomial basis of $\bF_q$ over $\bF_p$. Let $V$ be a vector space of dimension $m$ over $\bF_p$. For each element $x\in \bF_q$, it can be written 
uniquely as $x=\sum_{i=0}^{m-1}x_iw^i$ for some vector $(x_0,x_1,...,x_{m-1})\in V$. Let $G=(\bF_q,+)$ be the additive group of $\bF_q$ and $\hat{G}$ denote the additive character group of $G$. It is easy to see that $\hat{G}=\{\psi_a: a=\sum_{i=0}^{m-1}a_iw^i\in \bF_q\}$ with each character $\psi_a$ being defined as follows
\begin{equation}\label{Eqn_char}
\psi_a:x\mapsto\xi_p^{a\cdot x}, x=(x_0,x_1,...,x_{m-1})\in \bF_q,
\end{equation}
where $a\cdot x=\sum_{i=0}^{m-1}a_ix_i\in\bF_p$, and $\xi_p$ is a complex primitive $p$-th root of unity.

\subsection{Two-level autocorrelation sequences}
Sequences with low autocorrelation have vast applications in communication systems. Ternary two-level autocorrelation sequences are strongly related to SDSs. Given a ternary two-level autocorrelation sequence of period $v$ defined from an SDS, its all  autocorrelations are totally determined by the parameter $\lambda$.

\begin{lemma}
Let $G=\bZ_v$ be a cyclic group of order $v$ and $D\in\bZ[G]$ be a signed set of $G$. Let $S_D$ be the ternary sequence defined from $D$. Then the signed set $D$ is an SDS with $\lambda=-1$ if and only if $S_D$ has two-level autocorrelation.
\begin{proof}
Let $P$ and $N$ be two subsets of $G$ such that $D=P-N$. By the definition of the sequence $S_D$, for any $0\leq \tau<v$, we have
\[
\begin{aligned}
C_S(\tau)&=\sum_{i=0}^{v-1}s_{i+\tau}s_i^*\\
&=|P\cap P+\tau|+|N\cap N+\tau|-|P\cap N+\tau|-|N\cap P+\tau|\\
&=DD^{-1}(\tau),
\end{aligned}
\]
where $DD^{-1}(\tau)$ denotes the coefficient of the element $\tau$ in $DD^{-1}$. Note that $D$ is an SDS with $\lambda=-1$ if and only if $DD^{-1}=-(G-0_G)+k\cdot 0_G$. This completes the proof.
\end{proof}
\end{lemma}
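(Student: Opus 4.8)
The plan is to move everything into the group ring $\bZ[\bZ_v]$ and to identify the autocorrelation values of $S_D$ with the coefficients of the single group-ring element $DD^{-1}$. Writing $D=P-N=\sum_{g}a_g g$ with $a_g\in\{-1,0,1\}$ (so $a_g=1$ on $P$, $a_g=-1$ on $N$, and $a_g=0$ otherwise), the entries of the ternary sequence are exactly $s_i=a_{g_i}$. Since every $s_i$ is real we have $s_i^*=s_i$, and the cyclic shift $i+\tau$ corresponds to the group operation, i.e.\ addition in $\bZ_v$.

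First I would expand $C_S(\tau)$ directly. Reindexing gives $C_S(\tau)=\sum_{i}s_{i+\tau}s_i=\sum_{g}a_{g+\tau}a_g$, which splits according to the four sign combinations into $|P\cap(P+\tau)|+|N\cap(N+\tau)|-|P\cap(N+\tau)|-|N\cap(P+\tau)|$, the signed count of differences equal to $\tau$. The key observation is that in the additive group $g^{-1}=-g$, so $DD^{-1}=\sum_{g,h}a_g a_h(g-h)$, and the coefficient of a fixed $\tau\in\bZ_v$ in this product is $\sum_{h}a_{h+\tau}a_h$. Hence $C_S(\tau)=DD^{-1}(\tau)$, the coefficient of $\tau$ in $DD^{-1}$, for every $\tau$; this single identity is the whole engine of the proof.

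Next I would read off what the two conditions say about these coefficients. For an SDS with $\lambda=-1$ one has $n=k-\lambda=k+1$, so the defining equation becomes $DD^{-1}=-G+(k+1)\cdot 0_G=-(G-0_G)+k\cdot 0_G$; equivalently $DD^{-1}(\tau)=-1$ for all $\tau\neq 0$ and $DD^{-1}(0)=k$. By definition $S_D$ has two-level autocorrelation exactly when $C_S(\tau)=-1$ for all $0<\tau<v$. Via $C_S(\tau)=DD^{-1}(\tau)$ these two families of conditions agree on every off-phase coefficient, which yields both implications simultaneously. The only point needing care—and the closest thing to an obstacle—is the in-phase term $\tau=0$: here $DD^{-1}(0)=\sum_g a_g^2=|P|+|N|=k$ holds automatically because each nonzero $a_g$ squares to $1$, so the in-phase value is forced and imposes no extra constraint. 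Thus the equivalence rests entirely on the off-phase coefficients, and the substantive work is just the bookkeeping that matches the signed difference counts with the group-ring coefficient together with the correct reading of $n=k+1$ when $\lambda=-1$.
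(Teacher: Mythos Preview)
Your proof is correct and follows essentially the same route as the paper: identify $C_S(\tau)$ with the coefficient $DD^{-1}(\tau)$ via the signed intersection count, then read off the equivalence from $DD^{-1}=-(G-0_G)+k\cdot 0_G$. Your explicit handling of the in-phase term $\tau=0$ is a minor elaboration the paper leaves implicit, but otherwise the arguments coincide.
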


\section{Constructions by Paley type partial difference sets}\label{Sec_PDS}
In this section, we are going to construct SDSs using PDSs. Under our construction model, PDSs need to satisfy the condition $\lambda-\mu=-1$. This type of PDS is either the Paley PDS or the PDS with parameters $(243,22,1,2)$ or $(243, 220, 199, 200)$. By applying these PDSs to our construction, we obtain a lot of SDSs with new parameters.
 
\begin{lemma}\label{sds_pds}
    Let G be a group of order $v$ and $D'$ be a regular $(v,k,\lambda,\mu)$-PDS. If $\lambda-\mu=-1$, then there exists an SDS with parameters $(v,v-1,v-4k+4\mu-2)$.
\end{lemma}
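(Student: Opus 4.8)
The plan is to build the SDS directly from the PDS by signing the elements of $D'$ negatively and the remaining non-identity elements positively. Concretely, I would set $N = D'$ and $P = (G - 0_G) - D'$, so that $D = P - N$ is a signed set supported on all $v-1$ non-identity elements of $G$, matching the target size $k = v-1$. Viewed in $\bZ[G]$ this is simply $D = G - 0_G - 2D'$. Since $G$ is central in the group ring, $0_G$ is the identity, and $D'$ is regular (so $(D')^{(-1)} = D'$), one checks immediately that $D^{(-1)} = D$, and hence $DD^{-1} = D^2$; this step is where regularity is essential and, incidentally, it keeps the whole argument valid even for non-abelian $G$.

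The core computation is to expand $D^2 = (G - 0_G - 2D')^2$ and reduce it with the PDS identity. I would record the elementary facts $G^2 = vG$ and $G\cdot D' = kG$ (each needing only $|G|=v$ and $|D'|=k$), and, from \eqref{Eqn_PDS} together with the hypothesis $\lambda-\mu=-1$ and regularity, the relation $(D')^2 = -D' + \mu G + (k-\mu)\,0_G$. Substituting and collecting terms, the linear-in-$D'$ contributions cancel and one is left with $D^2 = (v-4k+4\mu-2)\,G + (4k-4\mu+1)\,0_G$. Matching this against the defining form $DD^{-1} = \lambda G + n\cdot 0_G$ gives $\lambda = v-4k+4\mu-2$ and $n = 4k-4\mu+1 = (v-1)-\lambda$, which is exactly the claimed $(v,\,v-1,\,v-4k+4\mu-2)$-SDS; as a sanity check the total coefficient of $0_G$ comes out to $v-1$, as it must equal the size of $D$.

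When $G$ is abelian there is a slicker route via Lemma~\ref{lem_charSDS}. For any non-principal $\chi$ one has $\chi(G)=0$ and $\chi(0_G)=1$, so $\chi(D) = -1 - 2\chi(D')$; regularity forces $\chi(D')\in\bR$, and applying $\chi$ to \eqref{Eqn_PDS} (using $\lambda-\mu=-1$) forces $\chi(D')^2 = -\chi(D') + (k-\mu)$. Then $\chi(D)\overline{\chi(D)} = (1+2\chi(D'))^2 = 1 + 4\chi(D') + 4\chi(D')^2 = 4(k-\mu)+1 = n$, which is independent of which of the two possible values $\chi(D')$ takes, so Lemma~\ref{lem_charSDS} finishes the proof at once.

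The genuinely \emph{creative} step is spotting the construction $D = G - 0_G - 2D'$; once that is in hand the verification is routine. The two points that require care are (i) confirming $D^{(-1)}=D$ so that $DD^{-1}$ really equals $D^2$, and (ii) invoking the hypothesis $\lambda-\mu=-1$ at precisely the moment it makes the $D'$-terms cancel, since this cancellation is exactly what forces $D^2$ to be a multiple of $G$ plus a multiple of $0_G$, i.e. what makes $D$ an SDS. I anticipate no serious obstacle beyond this bookkeeping.
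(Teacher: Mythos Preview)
Your proposal is correct and follows essentially the same route as the paper: form the signed element supported on $G\setminus\{0_G\}$ with $D'$ carrying one sign and its complement the other, expand $DD^{-1}$ in $\bZ[G]$, and substitute the PDS relation (with $\lambda-\mu=-1$) to see the $D'$-terms cancel. The only cosmetic difference is that the paper takes $P=D'$ and $N=G-D'-0_G$, i.e.\ the negative of your $D$, which is immaterial since $D$ and $-D$ give the same SDS; your character-theoretic alternative is a pleasant extra not in the paper.
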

\begin{proof}
    Let $P=D'$, $N=G-P-0_G$ and $D=P-N$. Then we get that
    \begin{equation}\label{Eqn_SDS_PDS1}
    \begin{aligned}
        DD^{-1}&=(2D'-G+0_G)(2D'-G+0_G)^{-1}\\
        &=(|G|-4|P|-2)G+4D'D'^{-1}+2D'+2D'^{-1}+0_G.
    \end{aligned}
    \end{equation}
    Since $D'$ is a PDS with $\lambda-\mu=-1$, we have
    \begin{equation}\label{Eqn_SDS_PDS2}
    \begin{aligned}
    D'D'^{-1}&=\lambda D'+\mu(G-D'-0_G)+k0_G\\
    &=-D'+\mu(G-0_G)+k0_G.
    \end{aligned}
    \end{equation}
    Substitute Eq. ~\ref{Eqn_SDS_PDS2} into Eq. ~\ref{Eqn_SDS_PDS1},
    \[
     DD^{-1}=(v-4k+4\mu-2)G+(4k-4\mu+1)0_G.
    \]
    Hence $D$ is an SDS with parameters $(v,v-1,v-4k+4\mu-2)$.
\end{proof}

Now we consider the PDSs with $\lambda-\mu=-1$. The next result in \cite{ArasuKT} tells us that for certain classes of regular abelian PDSs, almost all are Paley PDSs.


\begin{lemma}
Suppose that there exists a nontrivial regular abelian $(v,k,\lambda,\mu)$-PDS with $\lambda-\mu=-1$. Then either $D$ is a Paley PDS or $(v,k,\lambda,\mu)=(243,22,1,2)$ or $(243,220,199,200)$.
\end{lemma}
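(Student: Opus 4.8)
The plan is to pass from the defining group-ring identity to a statement about character values and then analyze the resulting quadratic. For a regular PDS we have $DD^{(-1)}=(\lambda-\mu)D+\mu G+(k-\mu)0_G$; applying a non-principal character $\chi$ and using $\chi(G)=0$, $\chi(0_G)=1$ and the fact that regularity ($D^{(-1)}=D$) makes $x:=\chi(D)$ real, this collapses to the scalar quadratic $x^2=(\lambda-\mu)x+(k-\mu)$. Under the hypothesis $\lambda-\mu=-1$ it reads $x^2+x-(k-\mu)=0$, whose roots are $r=\tfrac{-1+\Delta}{2}$ and $s=\tfrac{-1-\Delta}{2}$ with $\Delta=\sqrt{1+4(k-\mu)}$. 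Thus each of the $v-1$ non-principal characters sends $D$ to $r$ or to $s$; let $f$ and $g$ be the respective counts. Applying Lemma~\ref{Eqn_chi} at $0_G$ (which lies outside $D$) gives $\sum_{\chi\ne\chi_0}\chi(D)=-k$, i.e. $fr+gs=-k$, alongside $f+g=v-1$.

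I would first dispose of the case where $\Delta$ is irrational. Then $r,s$ are conjugate algebraic integers, and since the coefficients of $D$ are rational the Galois action on $\hat G$ matches each character of value $r$ with one of value $s$; hence $f=g=(v-1)/2$. Feeding this into $fr+gs=-k$ together with $r+s=-1$ forces $k=(v-1)/2$, and the standard counting identity $k(k-\lambda-1)=(v-k-1)\mu$ then pins the parameters to $(v,\tfrac{v-1}{2},\tfrac{v-5}{4},\tfrac{v-1}{4})$, a Paley PDS.

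It remains to handle $\Delta=2t+1\in\bZ_{>0}$, so that $r=t$ and $s=-(t+1)$. Here I would gather every arithmetic constraint: the multiplicities $f=\tfrac{(t+1)(v-1)-k}{2t+1}$ and $g=\tfrac{t(v-1)+k}{2t+1}$ must be non-negative integers, and the counting identity, after substituting $\mu=k-t(t+1)$, collapses to $t(t+1)(v-1)=k(v-k-1)$. Read as a quadratic in $k$, its discriminant is $(v-1)\big(v-(2t+1)^2\big)$, which must therefore be a perfect square. When it vanishes we recover the Paley family ($v=(2t+1)^2$); otherwise the two roots for $k$ are realized by a PDS and its complement, both still satisfying $\lambda-\mu=-1$. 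The pair $(243,22,1,2)$ and $(243,220,199,200)$ arises exactly here, with $t=4$, $\Delta=9$ and $(v-1)(v-\Delta^2)=242\cdot162=198^2$, yielding $k=22$ and $k=220$.

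The crux is to show that $v=243$ is the only non-Paley possibility. The perfect-square condition alone admits infinitely many pairs $(v,t)$, so finiteness can come only from the standing assumption that $G$ is \emph{abelian}. My plan is to invoke the structural theory of abelian PDSs developed in \cite{ArasuKT}: multiplier theorems together with character-sum and field-descent restrictions constrain the prime-power factorization of $v$ and the exponent of $G$ severely enough that only finitely many candidate parameter sets survive, after which a finite verification isolates the two sporadic cases. Turning this sketch into a rigorous elimination --- controlling all the number-theoretic possibilities and discarding spurious parameter sets that pass the elementary tests --- is where essentially all the difficulty lies; the character reduction and the irrational (conference) case above are routine by comparison.
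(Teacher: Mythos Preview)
The paper does not prove this lemma at all; it is quoted verbatim from \cite{ArasuKT} and simply cited. So there is no ``paper's proof'' to compare your attempt against.

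Your reduction is mathematically sound as far as it goes. The character computation $x^2+x-(k-\mu)=0$ is correct, and the irrational-discriminant case is handled correctly (a cleaner variant: from $fr+gs=-k$ and $f+g=v-1$ one gets $(f-g)\Delta=v-1-2k$, so $\Delta\notin\bQ$ forces $f=g$ and $k=(v-1)/2$ directly, without the Galois step). In the integer case your derivation of $k(v-k-1)=t(t+1)(v-1)$ and of the discriminant condition $(v-1)\bigl(v-(2t+1)^2\bigr)=\square$ is right, and the identification of $v=243$, $t=4$ is accurate.

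However, your final paragraph is the whole difficulty, and you do not actually carry it out: you note that the Diophantine condition alone has infinitely many solutions and then appeal to ``the structural theory of abelian PDSs developed in \cite{ArasuKT}'' for the elimination. That is not a proof but a pointer to one --- exactly the same pointer the paper gives. So your proposal is an honest outline of \emph{why} the result in \cite{ArasuKT} is needed, not an independent proof of the lemma. If your goal were a self-contained argument, the substantial work (multiplier and exponent bounds, local analysis of $v$, ruling out the surviving parameter sets) remains entirely to be done.
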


Paley PDSs are of interest in design theory as they can be used to construct some special types of divisible designs and divisible difference sets. See \cite{ArasuKT,ArasuKT1} for instances. Also, Paley PDSs in abelian groups can be used to construct binary sequences and arrays with small periodic off-phase auto-correlation, and hence have applications in communication science \cite{ChanYK}. Below are some results on the existence of Paley PDSs.

\begin{lemma}[\cite{S.L.MA1}]\label{lem_sl}
Let $G$ be a cyclic group of order $v\equiv 1\pmod{4}$, then $G$ contains a Paley PDS $D$ if and only if $v$ is a prime number and $D$ is the set of quadratic residues modulo $p$. 
\end{lemma}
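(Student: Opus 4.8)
The plan is to prove the two implications separately. The reverse direction is the classical Paley construction, while the forward direction carries the real difficulty.

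For the reverse direction, assume $v=p\equiv 1\pmod 4$ is prime and let $D$ be the set of nonzero squares of $\bF_p$ inside $G=(\bF_p,+)$, so $|D|=\tfrac{p-1}{2}$. Since $p\equiv 1\pmod 4$, $-1$ is a square, whence $D^{(-1)}=D$ and $0_G\notin D$; thus $D$ is regular and every $\chi(D)$ is real. Writing $\eta$ for the quadratic character of $\bF_p^{*}$ and $g=\sum_{x\in\bF_p}\eta(x)\xi_p^{x}$ for the quadratic Gauss sum (so $g^2=p$), the identity $D=\tfrac12\sum_{x\neq 0}(1+\eta(x))x$ gives, for a nonprincipal character $\psi_a$, the value $\psi_a(D)=\tfrac12\bigl(-1+\eta(a)\,g\bigr)\in\{\tfrac{-1+\sqrt p}{2},\tfrac{-1-\sqrt p}{2}\}$. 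Each of these roots satisfies $X^2+X-\tfrac{p-1}{4}=0$, i.e. $\psi_a(D)\overline{\psi_a(D)}=-\psi_a(D)+\tfrac{p-1}{4}$, which is precisely the image under $\psi_a$ of the partial difference set equation \eqref{Eqn_PDS} with $\lambda-\mu=-1$ and $k-\mu=\tfrac{p-1}{4}$; matching the principal character reduces to the parameter identity $k(k-1)=\lambda k+\mu(v-k-1)$. Lemma~\ref{Eqn_chi} then upgrades these character identities to $DD^{(-1)}=\lambda D+\mu(G-D-0_G)+k\,0_G$, so $D$ is a Paley PDS.

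For the forward direction, suppose the cyclic group $G=\bZ_v$, $v\equiv 1\pmod 4$, contains a Paley PDS $D$. Since $\lambda-\mu=-1\neq 0$ we may take $D$ regular (a PDS with $\lambda\neq\mu$ and $0_G\notin D$ satisfies $D^{(-1)}=D$), so the same computation as above yields $\chi(D)\in\{\tfrac{-1+\sqrt v}{2},\tfrac{-1-\sqrt v}{2}\}$ for every nonprincipal $\chi\in\hat G$. The crucial point is that $\chi(D)$ is an algebraic integer lying in $\bQ(\xi_d)$, where $d$ is the order of $\chi$. If $v$ is not a perfect square, then $\sqrt v$ is irrational, so $\tfrac{-1\pm\sqrt v}{2}\in\bQ(\xi_d)$ forces the real quadratic field $\bQ(\sqrt v)$ into $\bQ(\xi_d)$. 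Applying this to a character of prime order $\ell$ for each prime $\ell\mid v$ (such characters exist because $\bZ_v$ is cyclic) and using that the unique quadratic subfield of $\bQ(\xi_\ell)$ is $\bQ\bigl(\sqrt{(-1)^{(\ell-1)/2}\ell}\bigr)$, positivity of $v$ forces $\ell\equiv 1\pmod 4$ and the squarefree part of $v$ to equal $\ell$. As the squarefree part is a single integer, $v$ has exactly one prime factor, so $v=\ell^{a}$ with $\ell\equiv 1\pmod 4$. When instead the eigenvalues are rational (the perfect-square case), every $\chi(D)$ is a rational integer, so $D$ is a rational subset of $\bZ_v$, hence a union of the classes $C_d=\{x:\mathrm{ord}(x)=d\}$ of size $\phi(d)$; then $\tfrac{v-1}{2}=|D|$ must be a subset sum of $\{\phi(d):d\mid v\}$, and a gap argument around the dominant term $\phi(v)$ excludes a large family of such $v$.

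The main obstacle is then to exclude the proper prime powers $v=\ell^{a}$ with $a\geq 2$, where for $a$ odd the eigenvalues remain irrational and neither the field-theoretic reduction nor the crude counting argument bites: a first- and second-moment analysis of the projected multiset $\{\,|D\cap(g+H)|\,\}$ onto the subgroup chain of $\bZ_{\ell^{a}}$ turns out to be consistent with the Paley eigenvalues, so a finer local analysis is required. I would handle this as in the cited work of Ma, tracking how $D$ meets each subgroup of the cyclic $\ell$-group and showing that the Paley equation propagates incompatible parameters down the subgroup lattice unless $a=1$; equivalently, one shows that an abelian Paley PDS can only live in an elementary abelian group, which for a cyclic group forces $v=\ell$ prime. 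Finally, once $v=p$ is prime, a short Galois argument pins down $D$: since $\sigma_t(\sqrt p)=\left(\tfrac{t}{p}\right)\sqrt p$ and $\sigma_t(\chi_j(D))=\chi_{tj}(D)$, the set of $j$ with $\chi_j(D)=\tfrac{-1+\sqrt p}{2}$ is a coset of the squares; comparing $\chi_j(D)=\tfrac12\bigl(-1+\left(\tfrac{j}{p}\right)\sqrt p\bigr)$ with the values from the reverse direction and invoking Lemma~\ref{Eqn_chi} identifies $D$ with the set of quadratic residues (up to the multiplier $x\mapsto tx$), completing the proof.
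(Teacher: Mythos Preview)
The paper does not prove this lemma at all: it is quoted verbatim as a result from Ma's paper \cite{S.L.MA1}, with no argument given. So there is no ``paper's own proof'' to compare your attempt against; anything you write here is already more than what the paper supplies.

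That said, your write-up is not a complete proof, and you acknowledge this yourself. The reverse direction (Paley's construction via quadratic Gauss sums) is fine, and the character-value computation in the forward direction correctly pins the nonprincipal eigenvalues to $\tfrac{-1\pm\sqrt{v}}{2}$. Your field-theoretic step---forcing $\bQ(\sqrt v)\subset\bQ(\xi_\ell)$ for every prime $\ell\mid v$ and concluding that $v$ is a power of a single prime $\ell\equiv 1\pmod 4$---is a clean reduction in the non-square case. But the two remaining cases are exactly where the content of Ma's theorem lies, and you do not actually carry them out: for $v$ a perfect square you only say ``a gap argument\ldots excludes a large family of such $v$'', and for $v=\ell^a$ with $a\ge 2$ odd you explicitly defer to ``the cited work of Ma''. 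The exclusion of proper prime powers is the heart of the result (it is where one shows that an abelian Paley PDS must sit in an elementary abelian group), and nothing you have written substitutes for it. As it stands, your argument is a correct outline that stops short of the main difficulty.

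One small remark on the final step: once $v=p$ is prime, your Galois argument shows $D$ is either the set of quadratic residues or the set of non-residues; both are Paley PDSs (the complement of a regular Paley PDS has the same parameters), so the lemma's phrasing ``$D$ is the set of quadratic residues'' should be read up to this obvious ambiguity.
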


\begin{lemma}[\cite{Polhill}]\label{lem_pol}
Let $n>1$ be an odd integer. Then there exists a Paley PDS in a group of orders $n^4$ and $9n^4$.
\end{lemma}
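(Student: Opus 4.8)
The plan is to combine the character-theoretic criterion for partial difference sets with an inductive product construction organized around the prime factorization of $n$.

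First I would restate the target spectrally. A regular PDS with Paley parameters $(v,\frac{v-1}{2},\frac{v-5}{4},\frac{v-1}{4})$ has $\lambda-\mu=-1$ and $k-\mu=\frac{v-1}{4}$, so by the standard PDS character equation every non-principal $\chi\in\hat G$ satisfies
\[
\chi(D)=\frac{-1\pm\sqrt{v}}{2}.
\]
Conversely, any symmetric $D$ with $0_G\notin D$, $|D|=\frac{v-1}{2}$ and exactly these two character values is a Paley PDS, since a signed set is determined by its character sums (Lemma~\ref{Eqn_chi}). The point that makes the orders $n^4$ and $9n^4$ special is that $\sqrt{v}$ is then an odd integer, namely $n^2$ (respectively $3n^2$), so the two prescribed values $\frac{-1\pm n^2}{2}$ (respectively $\frac{-1\pm 3n^2}{2}$) are integers. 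This integrality is exactly what lets a combinatorial product construction succeed, and is why the statement is confined to fourth powers rather than to general $v\equiv 1\pmod 4$, where $\sqrt v$ is irrational.

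Next I would fix the ambient group and the local building blocks. Writing $n=\prod_i p_i^{a_i}$, take $G=\prod_i E_i$ with $E_i=(\bF_{q_i},+)$ and $q_i=p_i^{4a_i}$, so $|G|=n^4$. Since $q_i\equiv 1\pmod 4$, the element $-1$ is a square, so the non-zero squares in $\bF_{q_i}$ form a symmetric Paley PDS with non-trivial character values $\frac{-1\pm p_i^{2a_i}}{2}$; these are the local integer-eigenvalue Paley PDSs. For the order $9n^4$ I would adjoin one extra factor: if $3\nmid n$ I use the explicit Paley PDS $\{(1,0),(2,0),(0,1),(0,2)\}$ in $\bZ_3^2$, whose non-trivial character values are $\frac{-1\pm 3}{2}=1,-2$; if $3\mid n$ the $3$-part is $3^{4a+2}\equiv 1\pmod 4$ and I again use field squares in $\bF_{3^{4a+2}}$.

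The core of the proof is a composition lemma producing one global Paley PDS from the local ones. As a model case, if $A\subseteq G_1$ and $B\subseteq G_2$ are Paley PDSs of equal order $s^2$, and $A'=G_1\setminus(A\cup\{0_1\})$, $B'=G_2\setminus(B\cup\{0_2\})$, then
\[
D=\big(\{0_1\}\times(G_2\setminus\{0_2\})\big)\cup(A\times B)\cup(A'\times B')
\]
is a Paley PDS of order $s^4$: it is symmetric, avoids the identity, has size $\frac{(s^2-1)(s^2+1)}{2}=\frac{s^4-1}{2}$, and a direct character computation splitting into the three cases (both coordinates non-trivial, and each mixed case) gives $\chi(D)=\frac{-1\pm s^2}{2}$ throughout. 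This already settles the prime-power case $n=p^a$ by taking $A,B$ to be the field-square PDSs in $\bF_{p^{2a}}$. The main obstacle is that this clean product only works when the two factors have equal order: the mixed-character cases force $|G_1|=|G_2|$, so it cannot directly glue the local pieces attached to distinct primes, where $q_i\neq q_j$. Resolving this is the real content, and the plan is to pass through the broader families of Latin-square-type and negative-Latin-square-type PDSs, whose parameters are flexible enough to admit product constructions across unequal orders and which assemble into amorphic association schemes; one then fuses the appropriate classes so that, thanks to the integrality of all eigenvalues, every non-principal character of $G=\prod_i E_i$ sees precisely the two values $\frac{-1\pm n^2}{2}$, while symmetry and the size $\frac{n^4-1}{2}$ are preserved. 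Controlling the spectrum across all the mixed characters during this fusion, rather than only the generic both-non-trivial ones, is the step I expect to be hardest and where the bulk of the work lies; the $9n^4$ case then follows by feeding the extra order-$9$ (or $3^{4a+2}$) block into the same machine.
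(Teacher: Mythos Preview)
The paper does not prove this lemma at all: it is quoted verbatim as a result from \cite{Polhill} and used as a black box, so there is no ``paper's own proof'' to compare against beyond that citation.

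Your sketch is broadly faithful to the actual argument behind the cited result. The spectral reformulation is correct, the equal-order product you write down really does produce a Paley PDS in $G_1\times G_2$ (your case analysis checks out), and you correctly identify that the essential difficulty is gluing across prime-power factors of \emph{unequal} order, which is precisely why Polhill passes through (negative) Latin-square-type PDSs and amorphic association schemes rather than attempting a direct two-factor product. So as a roadmap your proposal is accurate.

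That said, what you have written is not yet a proof: you explicitly flag the fusion step as ``where the bulk of the work lies'' and do not carry it out. The content of \cite{Polhill} is exactly that step---building, in each elementary abelian $p$-factor, a suitable amorphic scheme whose classes have the right (integral) eigenvalues, and then showing that a carefully chosen union of product classes hits only the two target values $\frac{-1\pm n^2}{2}$ on every non-principal character, including all the mixed ones. Until that bookkeeping is done (it is not difficult conceptually, but it is where all the combinatorics lives), the argument is incomplete. If your goal is only to use the lemma as the present paper does, citing \cite{Polhill} suffices; if your goal is to reprove it, the missing piece is the explicit fusion recipe and the verification that it covers every character type.
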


\begin{lemma}[\cite{WangZ}]\label{lem_wz}
Let $v>1$ be an odd integer and $G$ be an abelian group of order $v$. Then $G$ contains a Paley PDS if and only if $v$ is a prime power and $v\equiv1\pmod{4}$, or $v=n^4$ or $9n^4$ with $n>1$ being an odd integer. 
\end{lemma}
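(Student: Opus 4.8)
The plan is to prove the two implications separately: the ``if'' direction is constructive, while the ``only if'' direction is the substantive classification. The engine for both is the character equation for a regular PDS. Applying a non-principal character $\chi$ to Eq.~\ref{Eqn_PDS} and using $\chi(G)=0$ together with $\chi(D)\in\bR$ (valid since a Paley PDS is regular, so $D=D^{(-1)}$) gives $\chi(D)^2=(\lambda-\mu)\chi(D)+(k-\mu)$. For the Paley parameters $(v,\frac{v-1}{2},\frac{v-5}{4},\frac{v-1}{4})$ this reduces to $\chi(D)^2+\chi(D)-\frac{v-1}{4}=0$, hence
\[
\chi(D)=\frac{-1\pm\sqrt v}{2}\qquad\text{for every non-principal }\chi\in\hat G.
\]
This single identity, read in two directions, is what I would exploit.

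\textbf{Sufficiency.} For $v=q$ a prime power with $q\equiv1\pmod4$ I would take $G=(\bF_q,+)$ and let $D$ be the set of nonzero squares of $\bF_q$. A direct Gauss-sum evaluation shows that $\chi(D)=\frac{-1\pm\sqrt q}{2}$ for every non-principal additive character $\chi$; by Lemma~\ref{Eqn_chi} this forces the group-ring identity \ref{Eqn_PDS} with exactly the Paley parameters, so a suitable abelian group of order $v$ carries a Paley PDS. (When $q$ is prime this is the cyclic case of Lemma~\ref{lem_sl}, and for higher prime powers the underlying group $(\bF_q,+)$ is elementary abelian rather than cyclic, consistent with that lemma.) For $v=n^4$ or $v=9n^4$ with $n>1$ odd, the required Paley PDS is delivered directly by Lemma~\ref{lem_pol}. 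This settles the ``if'' direction.

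\textbf{Necessity.} Suppose an abelian group $G$ of odd order $v$ contains a Paley PDS $D$. Integrality of the parameters already forces $v\equiv1\pmod4$. From the displayed identity, $\sqrt v=\pm\bigl(2\chi(D)+1\bigr)$ lies in the real subring of $\bZ[\xi_e]$, where $e$ is the exponent of $G$; thus $\bQ(\sqrt v)\subseteq\bQ(\xi_e)$, and the conductor of the quadratic field $\bQ(\sqrt v)$ must divide $e$, which already pins down which primes can divide $v$ and to what extent in terms of $e$. I would then combine this with the self-conjugacy behaviour of each prime $p\mid v$ modulo $e$: controlling whether a power of $p$ is congruent to $-1$ restricts the $p$-adic valuation of $v$, and restricting these character values to subgroups and quotients $G/H$ lets one propagate the constraint downward. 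In the cyclic situation this reduction terminates in Ma's classification (Lemma~\ref{lem_sl}); in the general abelian case it is designed to leave only the prime-power values and the two Polhill families $n^4,9n^4$.

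The hard part is the necessity direction, and the difficulty is not the character identity but the arithmetic of excluding every remaining $v$. One must rule out all composite $v$ that are neither prime powers nor of the form $n^4$ or $9n^4$, and this requires the full self-conjugacy/multiplier machinery for abelian PDSs coupled with the conductor restrictions on $\sqrt v\in\bQ(\xi_e)$. The delicate point is to organise these conditions so that they \emph{simultaneously} bound the exponent of $G$ and the factorisation of $v$, rather than treating each prime divisor in isolation; it is precisely this simultaneous control that forces the dichotomy in the statement.
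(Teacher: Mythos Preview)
The paper does not supply a proof of this lemma at all: it is quoted verbatim from \cite{WangZ} as an external classification result, with no argument given. So there is no ``paper's own proof'' against which to compare your proposal.

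That said, a few comments on the proposal itself. Your sufficiency direction is essentially correct in spirit, but note a quantifier slip: the lemma as written fixes an abelian group $G$ of order $v$, whereas your constructions produce a Paley PDS only in one particular abelian group of that order (the elementary abelian $(\bF_q,+)$ in the prime-power case, and whatever group Polhill's construction uses in the $n^4$, $9n^4$ cases). These do not cover all abelian $G$ of order $v$; indeed Lemma~\ref{lem_sl} already shows that $\bZ_{25}$ carries no Paley PDS even though $25$ is a prime power $\equiv1\pmod4$. The intended reading of the lemma is existential in $G$, and your argument matches that reading, but you should flag the discrepancy.

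For necessity you have correctly isolated the key character identity $\chi(D)=\tfrac{-1\pm\sqrt v}{2}$ and the resulting embedding $\bQ(\sqrt v)\subseteq\bQ(\xi_e)$, and you are right that the substance lies in turning conductor and self-conjugacy constraints into a complete exclusion of all other $v$. But what you have written is a plan, not a proof: the phrase ``designed to leave only the prime-power values and the two Polhill families'' is where the entire content of \cite{WangZ} lives, and nothing in your outline actually carries out that exclusion. If you intend to reprove the lemma rather than cite it, that step needs to be executed, not gestured at.
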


\begin{remark}
In \cite{Golay}, Golay constructed a perfect two error correcting $(11,6)$-linear code $C$ over $\bF_3$. The dual code $C^{\perp}$ is a two weight $(11,5)$-projective code with weights $6$ and $9$. By \cite[Theorem 8.1]{S.L.MA}, we can get a regular $(243,22,1,2)$-PDS $D$ in the additive group $G=(\bF_3^5,+)$. Note that $G-D-0_G$ is also a PDS with parameters $(243,220,199,200)$.
\end{remark}


\begin{thm}\label{Eqn_Paley}
Let $G$ be a group of order $v$ and $D'$ be a Paley type PDS with parameters $(v,\frac{v-1}{2},\frac{v-5}{4},\frac{v-1}{4})$. Then there exists an SDS $D$ with parameters $(v,v-1,-1)$.
\begin{proof}
Let $P=D'$, $N=G-P-0_G$ and $D=P-N$. Since $\lambda-\mu=-1$, we get that $D$ is an SDS of size $v-1$ and $\lambda=-1$ from Lemma \ref{sds_pds}. 
\end{proof}
\end{thm}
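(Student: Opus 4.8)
The plan is to treat this theorem as an immediate specialization of Lemma~\ref{sds_pds}, whose sole nontrivial hypothesis is the condition $\lambda-\mu=-1$ on the underlying PDS. First I would verify that a Paley PDS satisfies this hypothesis. With parameters $(v,\frac{v-1}{2},\frac{v-5}{4},\frac{v-1}{4})$ one has
\[
\lambda-\mu=\frac{v-5}{4}-\frac{v-1}{4}=-1,
\]
so the hypothesis of Lemma~\ref{sds_pds} holds exactly. I would also record that a Paley PDS is regular by its standard definition (the quadratic residues exclude $0_G$ and the residue set is closed under negation), so that the lemma applies verbatim.

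Next I would invoke Lemma~\ref{sds_pds} with $D'$ the given Paley PDS, taking $P=D'$, $N=G-P-0_G$, and $D=P-N$ as in the lemma's construction. This immediately produces an SDS $D$ with parameters $(v,v-1,v-4k+4\mu-2)$. The last step is purely arithmetic: substituting the Paley values $k=\frac{v-1}{2}$ and $\mu=\frac{v-1}{4}$ into the third parameter gives
\[
v-4k+4\mu-2=v-2(v-1)+(v-1)-2=-1,
\]
so $D$ is a $(v,v-1,-1)$-SDS, as claimed.

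Since all of the real work—the group-ring identity~\eqref{Eqn_SDS_PDS1} together with the substitution of the PDS equation~\eqref{Eqn_SDS_PDS2}—has already been carried out in Lemma~\ref{sds_pds}, there is essentially no obstacle here. The only things that need checking are the two identities $\lambda-\mu=-1$ and $v-4k+4\mu-2=-1$ on the Paley parameters, and the confirmation that ``Paley type PDS'' is regular in the sense required by Lemma~\ref{sds_pds} so that the construction $N=G-P-0_G$ is legitimate and the computation in that lemma is valid.
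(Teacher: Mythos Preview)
Your proposal is correct and follows exactly the same approach as the paper: set $P=D'$, $N=G-P-0_G$, $D=P-N$, observe that $\lambda-\mu=-1$ for the Paley parameters, and invoke Lemma~\ref{sds_pds} to obtain a $(v,v-1,-1)$-SDS. The only minor remark is that your justification of regularity via ``quadratic residues'' applies to the classical Paley construction rather than to an arbitrary Paley-type PDS; in general one simply takes regularity as part of the hypothesis (as the paper does implicitly), since Lemma~\ref{sds_pds} requires it.
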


By Lemmas \ref{lem_sl}-\ref{lem_wz} and Theorem \ref{Eqn_Paley}, we can easily get the following corollaries.

\begin{corollary}
Let $n$ be an odd integer and $v$ be an integer satisfying that $v=n^4$ or $v=9n^4$. Suppose that $G$ is a group of order $v$. Then $G$ contains SDSs with parameters $(n^4,n^4-1,-1)$ and $(9n^4,9n^4-1,-1)$.
\end{corollary}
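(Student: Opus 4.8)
The plan is to produce a Paley PDS inside the prescribed group $G$ itself and then convert it into the desired SDS via Theorem~\ref{Eqn_Paley}. The decisive tool is Lemma~\ref{lem_wz}: its characterization of Paley PDS existence depends only on the order $v$, and therefore produces a Paley PDS in \emph{every} abelian group of order $v$. This is exactly what is needed to land the SDS in the given $G$, rather than in some unspecified group of the same order, and it is the point on which the whole argument turns.

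I would treat the two parameter families in parallel. Fix an odd integer $n>1$ and set $v=n^4$ in the first case and $v=9n^4$ in the second; in both cases $v$ is an odd integer of the form covered by Lemma~\ref{lem_wz}. Hence, for any abelian group $G$ of order $v$, Lemma~\ref{lem_wz} guarantees that $G$ contains a Paley PDS $D'$, that is, a regular $(v,\frac{v-1}{2},\frac{v-5}{4},\frac{v-1}{4})$-PDS. Feeding $D'$ into Theorem~\ref{Eqn_Paley} — equivalently, setting $P=D'$, $N=G-P-0_G$, $D=P-N$ and using $\lambda-\mu=-1$ exactly as in the proof of Lemma~\ref{sds_pds} — yields an SDS $D$ in $G$ with parameters $(v,v-1,-1)$. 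Specializing $v$ then gives the two claimed families $(n^4,n^4-1,-1)$ and $(9n^4,9n^4-1,-1)$.

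The point that must be made explicit, and the real content of the universal (``for every $G$'') reading, is the quantifier over groups. A proof routed through Lemma~\ref{lem_pol} would only supply a Paley PDS in \emph{some} group of order $v$, hence only an SDS in that one group; it is Lemma~\ref{lem_wz} that upgrades existence to \emph{all} abelian groups of order $v$, which is why I invoke it in place of Lemma~\ref{lem_pol}. Two boundary remarks are in order. First, the hypotheses of Lemma~\ref{lem_wz} require $n>1$, so the degenerate values $n=1$ (where $v=1$ or $v=9$) should be excluded from or verified separately for the statement. Second, the existence input is abelian, so the corollary is naturally read for abelian $G$; I emphasize that the conversion step of Theorem~\ref{Eqn_Paley} is itself a group-ring identity — using only that $D'$ is regular, so $D'^{(-1)}=D'$, together with $G^{(-1)}=G$ — and hence imposes no further restriction beyond what Lemma~\ref{lem_wz} already requires.
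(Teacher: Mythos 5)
Your proposal is correct and takes essentially the same route as the paper, which derives this corollary in one line from the Paley PDS existence results (Lemmas~\ref{lem_sl}--\ref{lem_wz}) combined with Theorem~\ref{Eqn_Paley}. Your extra care about the quantifier over groups (invoking Lemma~\ref{lem_wz} rather than Lemma~\ref{lem_pol}) and the boundary caveats ($n>1$ and $G$ abelian, neither of which the corollary states explicitly) are sound refinements of the paper's terse citation, not a different argument.
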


\begin{corollary}
Let $p$ be a prime, $v=p^n$ and $v\equiv 1\pmod{4}$. Let $G$ be an abelian group of order $v$. Then $G$ contains SDSs with parameters $(v,v-1,-1)$.
\end{corollary}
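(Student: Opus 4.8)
The final statement to prove is the corollary asserting that for a prime $p$, $v = p^n$ with $v \equiv 1 \pmod 4$, and $G$ an abelian group of order $v$, the group $G$ contains SDSs with parameters $(v, v-1, -1)$. My plan is to reduce this entirely to the machinery already assembled in the excerpt, since the statement is explicitly flagged as following ``easily'' from Lemmas~\ref{lem_sl}--\ref{lem_wz} together with Theorem~\ref{Eqn_Paley}.

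\textbf{Overall strategy.} The proof is a two-line chaining argument: first establish the \emph{existence} of a Paley PDS in $G$ under the stated hypotheses, then feed that PDS into Theorem~\ref{Eqn_Paley} to manufacture the desired $(v,v-1,-1)$-SDS. So the entire content reduces to verifying that the hypotheses $v = p^n$, $v \equiv 1 \pmod 4$, $G$ abelian, place us squarely inside the existence criterion for Paley PDSs.

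\textbf{Step one (existence of the Paley PDS).} I would invoke Lemma~\ref{lem_wz}, which gives the complete characterization: an abelian group $G$ of odd order $v > 1$ contains a Paley PDS if and only if $v$ is a prime power with $v \equiv 1 \pmod 4$, or $v = n^4$ or $9n^4$ for an odd integer $n > 1$. Under our hypotheses $v = p^n$ is a prime power and $v \equiv 1 \pmod 4$; in particular $v$ is odd and exceeds $1$, so $G$ is an abelian group meeting the first alternative of the criterion. Hence $G$ contains a Paley PDS $D'$, that is, a regular PDS with parameters $\left(v, \frac{v-1}{2}, \frac{v-5}{4}, \frac{v-1}{4}\right)$. (One may equivalently cite Lemma~\ref{lem_sl} in the cyclic case, but Lemma~\ref{lem_wz} covers all abelian $G$ at once, which is exactly the generality claimed here.)

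\textbf{Step two (converting to an SDS).} With $D'$ in hand, I would apply Theorem~\ref{Eqn_Paley} directly. Its hypothesis is precisely that $G$ admits a Paley type PDS with parameters $\left(v, \frac{v-1}{2}, \frac{v-5}{4}, \frac{v-1}{4}\right)$, and its conclusion is that $G$ then contains an SDS $D$ with parameters $(v, v-1, -1)$. Setting $P = D'$, $N = G - P - 0_G$, and $D = P - N$ as in the proof of that theorem, the parameter identity $\lambda - \mu = \frac{v-5}{4} - \frac{v-1}{4} = -1$ holds automatically for any Paley PDS, so Lemma~\ref{sds_pds} yields $\lambda = -1$ and size $v - 1$. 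This completes the argument.

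\textbf{Anticipated obstacle.} There is essentially no hard analytic step: the only thing to watch is hypothesis-matching, namely confirming that the arithmetic conditions ($v$ odd, $v > 1$, prime power, $v \equiv 1 \pmod 4$) genuinely land in the ``if and only if'' branch of Lemma~\ref{lem_wz}. The one subtlety worth a sentence is that $v \equiv 1 \pmod 4$ forces $v$ odd (so $p$ is an odd prime), which is required both for $G$ to have odd order and for the Paley parameters to be integral; I would note this explicitly so that the invocation of Lemma~\ref{lem_wz} is clearly justified. Beyond that, the result is a pure corollary and the proof is a citation chain.
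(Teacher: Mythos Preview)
Your proposal is correct and follows exactly the approach the paper intends: invoke Lemma~\ref{lem_wz} to obtain a Paley PDS in the abelian group $G$ (since $v=p^n$ is an odd prime power with $v\equiv 1\pmod 4$), then apply Theorem~\ref{Eqn_Paley} to produce the $(v,v-1,-1)$-SDS. The paper itself offers no further argument beyond the citation chain you have spelled out.
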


\begin{corollary}
Let $G$ be an elementary abelian $3$-group of order $243$. Then there exists an SDS with parameters $(243,242,161)$.
\end{corollary}

The following example is a nontrivial SDS constructed from a regular Paley PDS given in \cite{Paley}.

\begin{example}\label{exm_Paley} 
Let $G=(\bF_q,+)$ be the additive group of a finite field $\bF_q$, where $q$ is an odd prime power and $q\equiv 1\pmod{4}$. The set $D'$ of all nonzero squares in $\bF_q$ forms a Paley PDS. Define the signed set $D=P-N$ with $P=D'$ and $N=G-P-0_G$. Then $D$ is a $(q,q-1,-1)$-SDS.
\end{example}

\section{Product constructions in $3$-groups}\label{sec_3group}
In this section, we construct SDSs on the $3$-groups. Using the method of product construction and character theory, we obtain an infinite class of SDSs with parameters $(3^{2m+1},3^{2m}+1,1)$ based on the SDSs obtained in Section \ref{Sec_PDS}. In this class of SDSs, neither $P$ nor $N$ is a DS or PDS.

Let $m$ be a positive even integer and $G=(\bF_{3^{2m+1}},+)$ be the additive group of finite field $\bF_{3^{2m+1}}$. Let $V$ be a vector space of dimension $2m+1$ over $\bF_3$. We take subspaces $W_0,W_1$ of $V$ such that $V\cong W_0\oplus W_1\oplus W_1$, where $W_0, W_1$ have dimension $1$ and $m$, respectively. Let $G_0=(W_0,+)$ and $G_1=(W_1,+)$. Note that $G\cong G_0\times G_1\times G_1$. Let $P'$ be a Paley PDS in $G_1$ and $N'=G_1-P'-0_{G_1}$. Then $D'=P'-N'$ is a $(3^m,3^m-1,-1)$-SDS by Theorem \ref{Eqn_Paley}. We take $x_0\in G_0\setminus \{0_{G_0}\}$, $x_1\in G_1$ and define the subsets $P$ and $N$ of $G$ and signed set $D$ as follows:
\begin{equation}\label{Eqn_3SDS} 
\begin{aligned}
P&=(x_0,0_{G_1},G_1)+(0_{G_0},G_1,x_1)+(0_{G_0},P',P')+(0_{G_0},N',N'),\\
N&=(0_{G_0},P',N')+(0_{G_0},N',P'),\\
D&=P-N=(x_0,0_{G_1},G_1)+(0_{G_0},G_1,x_1)+(0_{G_0},D',D').
\end{aligned}
\end{equation}
The following theorem tells us that $D$ is an SDS.

\begin{thm}\label{thm_3SDS}
Let $m$ be an even integer and $\bF_{3^{2m+1}}$ be a finite field.  There exist SDSs with parameters $(3^{2m+1},3^{2m}+1,1)$.
\end{thm}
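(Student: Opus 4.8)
The plan is to verify the characterization in Lemma~\ref{lem_charSDS}. For the target parameters we have $n = k-\lambda = (3^{2m}+1)-1 = 3^{2m}$, so it suffices to check that $D$ in \eqref{Eqn_3SDS} is a genuine signed set of size $3^{2m}+1$ and that $\chi(D)\overline{\chi(D)} = 3^{2m}$ for every non-principal character $\chi$ of $G \cong G_0 \times G_1 \times G_1$.

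First I would confirm that $D$ is a $\{-1,0,1\}$-valued element of $\bZ[G]$ and count its size. Writing $D'(g)$ for the coefficient of $g$ in $D' = P'-N'$ (so $D'(g) = \pm 1$ for $g \neq 0_{G_1}$ and $D'(0_{G_1}) = 0$), the three summands of $D$ occupy essentially disjoint coordinate patterns: the first lives on the coset with first coordinate $x_0 \neq 0_{G_0}$, while the other two live on the first-coordinate-$0$ block, where one must take the parameter $x_1$ so that no coefficient exceeds one (e.g.\ $x_1 = 0_{G_1}$, so that the $(0_{G_0},G_1,x_1)$ layer meets the $(0_{G_0},D',D')$ layer only where the latter vanishes). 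A direct count then gives $|P| + |N| = 2\cdot 3^{m} + (3^{m}-1)^2 = 3^{2m} + 1$, using $|P'| = |N'| = \tfrac{3^{m}-1}{2}$. This is the routine bookkeeping step.

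The heart of the argument is the character evaluation. Every character of $G$ factors as $\chi = \chi^{(0)}\otimes\chi^{(1)}\otimes\chi^{(2)}$, and since $\chi^{(i)}(G_1)$ equals $3^{m}$ when $\chi^{(i)}$ is principal and $0$ otherwise, evaluating on \eqref{Eqn_3SDS} collapses to
\[
\chi(D) = 3^{m}\varepsilon_2\,\chi^{(0)}(x_0) + 3^{m}\varepsilon_1\,\chi^{(2)}(x_1) + \chi^{(1)}(D')\,\chi^{(2)}(D'),
\]
where $\varepsilon_i = 1$ if $\chi^{(i)}$ is principal and $0$ otherwise. The two facts I need about the base SDS $D'$ follow from Theorem~\ref{Eqn_Paley} together with Lemma~\ref{lem_charSDS} applied inside $G_1$: that $\chi^{(i)}(D') = \chi_0(D') = |P'|-|N'| = 0$ when $\chi^{(i)}$ is principal, and that $|\chi^{(i)}(D')|^2 = 3^{m}$ when $\chi^{(i)}$ is non-principal (here $n' = (3^m-1)-(-1) = 3^m$). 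Feeding these in, I would split into cases on $(\varepsilon_1,\varepsilon_2)$. When both $\chi^{(1)},\chi^{(2)}$ are non-principal, only the last term survives and $|\chi(D)|^2 = 3^{m}\cdot 3^{m} = 3^{2m}$. When exactly one of them is principal, the product term vanishes by $\chi_0(D') = 0$, leaving a single term $3^{m}\cdot(\text{root of unity})$, again of squared modulus $3^{2m}$.

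The case I expect to be the crux is when both $\chi^{(1)}$ and $\chi^{(2)}$ are principal; then non-principality of $\chi$ forces $\chi^{(0)}$ to be non-principal, and $\chi(D)$ reduces to $3^{m}\bigl(\chi^{(0)}(x_0) + 1\bigr)$. Since $G_0$ has prime order $3$ and $x_0 \neq 0_{G_0}$, the value $\omega = \chi^{(0)}(x_0)$ is a primitive cube root of unity, so $|1+\omega|^2 = 2 + (\omega + \overline{\omega}) = 2 - 1 = 1$ and hence $|\chi(D)|^2 = 3^{2m}$ here as well. This is exactly why the single nonzero coordinate $x_0$ is placed in the order-$3$ factor $G_0$. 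Having checked $|\chi(D)|^2 = 3^{2m}$ in all cases, Lemma~\ref{lem_charSDS} completes the proof. I would also remark that the evenness of $m$ is used only to guarantee, via Lemma~\ref{lem_wz}, that the Paley PDS $P'$ and hence $D'$ exist in $G_1$, since $3^{m}\equiv 1\pmod 4$ precisely when $m$ is even.
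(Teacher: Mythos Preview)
Your proof is correct and follows essentially the same route as the paper's: evaluate $\chi(D)$ for $\chi=\chi^{(0)}\otimes\chi^{(1)}\otimes\chi^{(2)}$, use $\chi_0(D')=0$ and $|\chi^{(i)}(D')|^2=3^m$ for non-principal $\chi^{(i)}$, and split into the same four cases on $(\varepsilon_1,\varepsilon_2)$. Your treatment is in fact slightly more careful than the paper's in two places---you explicitly justify that the three summands in \eqref{Eqn_3SDS} give $\{-1,0,1\}$ coefficients (which indeed forces $x_1=0_{G_1}$, a constraint the paper leaves implicit), and you spell out why the hypothesis that $m$ is even is needed, namely so that $3^m\equiv 1\pmod 4$ and a Paley PDS exists in $G_1$.
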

\begin{proof}
We will use the notations defined as above and show that the signed set $D$ listed in Eq. ~\ref{Eqn_3SDS} is an SDS. By the structure of $D$, we can obtain that $|D|=|G_1|+|G_1|+|D'|\times |D'|=3^{2m}+1$. To see $D$ being an SDS with $\lambda=1$, by Lemma \ref{lem_charSDS},  it suffices to prove that for any non-principal character $\chi\in \hat{G}$,
\[
\chi(D)\overline{\chi(D)}=3^{2m}=k-\lambda=|D|-1.
\] 
For each character $\chi\in\hat{G}$, there exist $\phi_a\in\hat{G_0}$ and $\psi_{b_1},\psi_{b_2}\in\hat{G_1}$, where $\phi_a,\psi_{b_1},\psi_{b_2}$ are as forms of Eq. ~\ref{Eqn_char}. Hence we have 
\[
\hat{G}=\{\chi_{a,b_1,b_2}=\phi_a\psi_{b_1}\psi_{b_2}:a\in G_0,b_1,b_2\in G_1\}.
\] 
For any $\chi_{a,b_1,b_2}\in \hat{G}$, 
\[
\begin{aligned}
\chi_{a,b_1,b_2}(D)&=\phi_a(x_0)\psi_{b_1}(0_{G_1})\psi_{b_2}(G_1)+\phi_a(0_{G_0})\psi_{b_1}(G_1)\psi_{b_2}(x_1)+\phi_a(0_{G_0})\psi_{b_1}(D')\psi_{b_2}(D')\\
&=\phi_a(x_0)\psi_{b_2}(G_1)+\psi_{b_1}(G_1)\psi_{b_2}(x_1)+\psi_{b_1}(D')\psi_{b_2}(D').
\end{aligned}
\]
Note that $\psi_0(D')=|P'|-|N'|=0$ and 
\[
\psi_b(G_1)=\begin{cases}|G_1|,&\textup{ if }b=0_{G_1},\\0,&\textup{ if }b\neq 0_{G_1},\end{cases}
\]
 so we discuss the following cases. 
\begin{itemize}
    \item[(1)]$b_1=0_{G_1}$ and $b_2\neq 0_{G_1}$. In this case, we have
    \[
        \chi_{a,b_1,b_2}(D)\overline{\chi_{a,b_1,b_2}(D)}=|G_1|^2\psi_{b_2}(x_1)\overline{\psi_{b_2}(x_1)}=3^{2m}.
    \]
    \item[(2)]$b_1\neq0_{G_1}$ and $b_2=0_{G_1}$. In this case, we have
    \[
        \chi_{a,b_1,b_2}(D)\overline{\chi_{a,b_1,b_2}(D)}=|G_1|^2\phi_a(x_0)\overline{\phi_a(x_0)}=3^{2m}.
    \]
    \item[(3)]$b_1=0_{G_1}$, $b_2=0_{G_1}$ and $a\neq 0_{G_0}$. Since $a\neq 0_{G_0}$ and $x_0\neq 0_{G_0}$, we have $\phi_a(x_0)=\xi_3$ or $\xi_3^2$.
    \[
        \chi_{a,b_1,b_2}(D)\overline{\chi_{a,b_1,b_2}(D)}=|G_1|^2\xi_3^2\xi_3=3^{2m}.
    \]
    \item[(4)]$b_1\ne 0_{G_1}$ and $b_2\ne 0_{G_1}$. Since $D'$ is a $(3^m,3^m-1,-1)$-SDS, we have $\psi_b(D')\overline{\psi_b(D')}=3^m$ for any non-zero element $b\in G_1$. This implies that 
    \[
        \chi_{a,b_1,b_2}(D)\overline{\chi_{a,b_1,b_2}(D)}=\psi_{b_1}(D')\overline{\psi_{b_1}(D')}\psi_{b_2}(D')\overline{\psi_{b_2}(D')}=3^{2m}.
    \]
\end{itemize}
 Hence $D$ is an SDS with parameters $(3^{2m+1},3^{2m}+1,1)$.
\end{proof}

\begin{example}
Let $G=(\bF_{3^5},+)\cong(\bF_3^5,+)$ and let $G_1=(\bF_3^2,+)$. Take 
\[
\begin{aligned}
    P'&=\{(1,1),(2,1),(1,2),(2,2)\},\\
    N'&=G_1-P'-0_{G_1}.
\end{aligned}
\]
 Then $P'$ is a Paley type PDS with parameters $(9,4,1,2)$ and $D'=P'-N'$ is a $(9,8,-1)$-SDS. The following signed set $D$ is a $(243,82,1)$-SDS: 
\[
D=(1,0_{G_1},G_1)+(0_{G_0},G_1,(1,0))+(0_{G_0},D',D').
\]
\end{example}

\section{Constructions by cyclotomic classes}\label{sec_cyc}
In this section, we will construct an SDS $D=P-N$ over $G=\bF_q$ by using cyclotomic classes. In particular, we consider the set $P,N$ not only to be a single cyclotomic class $A$ or the unions of cyclotomic classes $B$, but also $A\pm 0_G$ or $B\pm 0_G$.

Let $\bF_q$ be a finite field with $q$ elements, where $q$ is a prime power. Consider the multiplicative group $\bF_q^*$. Let $w$ be a primitive element of $\bF_q^*$, and write $q-1=ef$, where $e,f>1$ are two integers. Let 
\begin{equation}\label{Eqn_C0}
C_0=\<w^e\>=\left\{w^{ej}|0\leq j\leq f-1\right\}
\end{equation}
be the subgroup of $\bF_q^*$ generated by $w^e$, and let 
\begin{equation}
C_i=w^iC_0=\left\{w^ix|x\in C_0\right\}, \textup{for} \ 0\leq i\leq e-1,
\end{equation}
be the cyclotomic cosets of $C_0$. Then the cyclotomic numbers are defined by
\begin{equation}
(i,j)_e=|(C_i+1)\cap C_j|
\end{equation}
for $0\leq i,j\leq e-1$. To simplify notations, we omit the subscript $e$ and denote $(i,j)_e$ by $(i,j)$. The partition $C_0,C_1,...,C_{e-1},\{0_G\}$ of $\bF_q$ forms a subalgebra, which is called a Schur ring, of the group ring $\bZ[G]$, where $G=(\bF_q,+)$ is the additive group of $\bF_q$. In other words, the subspace spanned by $C_i$'s and $0_G$ is closed under the multiplication
\begin{equation}
C_iC_j=\left\{x+y|x\in C_i,y\in C_j \right\}=\sum_{k=0}^{e-1}(j-i,k)C_{k+i}+|C_0\cap -C_{j-i}|0_G
\end{equation}
for any $0\leq i,j\leq f-1$. 

A powerful method for constructing SDS in the additive groups of finite fields is employing cyclotomic class. We will classify all fourth-order ($e=4$) cyclotomic SDS using the cyclotomic number table. Before proceeding with the classification, we  list the results of Katre and Rajwade \cite{Katre} regarding fourth-order cyclotomic numbers. 

\begin{thm}\label{thm_cctable}
Let $p$ be an odd prime, $q=p^n\equiv 1\pmod{4}$, $q=1+4f$. Let $\bF_q$ be a finite field and $w$ be a generator of $\bF_q^*$. If $p\equiv-1\pmod{4}$, let $s=(-p)^{n/2}$ and $t=0$. If $p\equiv1 \pmod{4}$, let $s$ be uniquely determined by $q=s^2+t^2$, $p\nmid s$, $s\equiv1\pmod{4}$, and  $t$ be uniquely determined by $w^{(q-1)/4}\equiv s/t\mod{p} $. Then the cyclotomic numbers of order $4$ for $\bF_q$, corresponding to $w$, are determined unambiguously by Tables ~\ref{table 1}-\ref{table 2}.
\end{thm}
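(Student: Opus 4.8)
The plan is to derive both tables from the evaluation of Jacobi sums, which is the standard route to cyclotomic numbers of a fixed order. Fix a multiplicative character $\chi$ of $\bF_q^*$ of order $4$, normalized by $\chi(w)=\xi_4$, and write $\eta=\chi^2$ for the quadratic character. Since $C_i=\{x\in\bF_q^*:\chi(x)=\xi_4^{\,i}\}$, the indicator function has the finite Fourier expansion $\mathbf 1_{C_i}(x)=\tfrac14\sum_{k=0}^3\xi_4^{-ik}\chi^k(x)$ for $x\neq 0$. First I would rewrite $(i,j)=|(C_i+1)\cap C_j|=\#\{y\in C_i:y+1\in C_j\}$ as a double character sum. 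Substituting the Fourier expansions and using the change of variable $u=-y$ gives $\sum_{y}\chi^a(y)\chi^b(1+y)=\chi^a(-1)\,J(\chi^a,\chi^b)$ for the nondegenerate terms, where $J(\lambda,\mu)=\sum_u\lambda(u)\mu(1-u)$. Because $q\equiv 1\pmod 4$ we have $\chi(-1)=\xi_4^{(q-1)/2}=(-1)^f$, so the factor $\chi^a(-1)=(-1)^{af}$ appears throughout; this is exactly what forces the final formulas to split according to the parity of $f$. The upshot is a clean expression
\[
(i,j)=\frac{1}{16}\sum_{a,b=0}^{3}\xi_4^{-ia-jb}(-1)^{af}\,J(\chi^a,\chi^b)+(\text{degenerate correction terms}),
\]
in which the correction terms come from the indices $a=0$ or $b=0$ and are elementary constants.

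Next I would evaluate the Jacobi sums appearing above. The degenerate ones are immediate: $J(\chi^a,\chi^{-a})=-\chi^a(-1)$ when $\chi^a\neq\chi_0$, and $J(\eta,\eta)=-\eta(-1)=-1$. The remaining nontrivial sums of order $4$ all reduce to the single quartic sum $J(\chi,\chi)$ through the Gauss-sum identities $g(\chi)g(\bar\chi)=\chi(-1)q$ and the Hasse--Davenport product relation, so the entire table is governed by $J(\chi,\chi)$. The heart of the argument is to pin this sum down precisely. One knows $|J(\chi,\chi)|^2=q$ and $J(\chi,\chi)\in\bZ[i]$, hence $J(\chi,\chi)=s+ti$ with $s^2+t^2=q$; the problem is that modulus and integrality determine it only up to the units $\{\pm1,\pm i\}$ and complex conjugation, i.e.\ up to the sign choices and the swap $s\leftrightarrow t$. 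Resolving this is where the normalizations in the statement enter: a Stickelberger-type congruence $J(\chi,\chi)\equiv -1\pmod{2(1+i)}$ fixes the residue of the real part and, together with $s\equiv 1\pmod 4$, the sign of $s$, while the sign of $t$ and its matching to the chosen generator are encoded in the condition $w^{(q-1)/4}\equiv s/t\pmod p$.

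I would then treat the prime-power lifting separately in the two cases of the theorem. When $p\equiv 1\pmod 4$, a quartic character already exists on $\bF_p$, and the Hasse--Davenport relation $g_{\bF_q}(\chi)=(-1)^{n-1}g_{\bF_p}(\chi')^{\,n}$ propagates the prime-field Jacobi sum to $\bF_q$, yielding a genuine representation $q=s^2+t^2$ with $p\nmid s$. When $p\equiv -1\pmod 4$, a quartic character exists on $\bF_q$ only for even $n$; here the lifted sum is real, so $t=0$, and Hasse--Davenport forces $J(\chi,\chi)=(-p)^{n/2}$, giving $s=(-p)^{n/2}$, $t=0$ and $q=s^2$. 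Finally I would substitute all evaluated Jacobi sums into the displayed formula, collect the rational parts, and split on the parity of $f$ (via the factor $(-1)^{af}$) to read off the two tables; each $(i,j)$ becomes an explicit $\bZ$-combination of $1,q,s,t$ divided by $16$.

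The hard part will be the unit/sign determination of $J(\chi,\chi)$: everything downstream is bookkeeping inside the Fourier/Jacobi formalism, but the passage from ``$s^2+t^2=q$ up to units'' to an \emph{unambiguous} $(s,t)$ tied to the specific primitive element $w$ requires the delicate congruence analysis (Stickelberger congruences and explicit Gauss-sum sign evaluation) that constitutes the genuine content of Katre and Rajwade's result. I would therefore invoke their determination at precisely this step rather than reprove it, and devote the remaining effort to verifying that the assembled combinations match the claimed tables in each parity class.
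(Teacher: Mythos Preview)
The paper does not prove this theorem at all: it is quoted verbatim from Katre and Rajwade \cite{Katre} as a known result, immediately after the sentence ``we list the results of Katre and Rajwade regarding fourth-order cyclotomic numbers,'' and is then used as a black box in the subsequent case analysis. So there is no ``paper's own proof'' to compare against.

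That said, your outline is the standard route and is essentially how the original Katre--Rajwade argument proceeds: express $(i,j)$ as a linear combination of Jacobi sums via the orthogonality/Fourier expansion of the class indicators, reduce everything to the single quartic Jacobi sum $J(\chi,\chi)$, and then pin down $J(\chi,\chi)=s+ti$ unambiguously in terms of the chosen generator $w$. You have correctly identified that the only genuinely nontrivial step is this last sign/unit determination, and that the split into Tables~\ref{table 1} and~\ref{table 2} comes from $\chi(-1)=(-1)^f$. One caution: you propose to ``invoke their determination at precisely this step rather than reprove it,'' but that determination \emph{is} the content of the theorem being cited, so if you were actually writing a self-contained proof you would need to carry out the Stickelberger-type congruence argument yourself; otherwise your proof reduces to ``cite Katre--Rajwade,'' which is exactly what the paper does.
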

\begin{table}[h]
\caption{$f$ even}
\label{table 1}
\centering
\begin{tabular}{|c|c|c|c|c|}\hline
&$0$&$1$&$2$&$3$\\ \hline
$0$&$A$&$B$&$C$&$D$\\ \hline
$1$&$B$&$D$&$E$&$E$\\ \hline
$2$&$C$&$E$&$C$&$E$\\ \hline
$3$&$D$&$E$&$E$&$B$\\ \hline
\end{tabular}
\end{table}
In Table~\ref{table 1},
\begin{equation}
\begin{array}{l}
A=\frac{1}{16}(q-11-6s),\\
B=\frac{1}{16}(q-3+2s+4t),\\
C=\frac{1}{16}(q-3+2s),\\
D=\frac{1}{16}(q-3+2s-4t),\\
E=\frac{1}{16}(q+1-2s).
\end{array}
\end{equation}

\begin{table}[h]
\caption{$f$ odd}
\label{table 2}
\centering
\begin{tabular}{|c|c|c|c|c|}\hline
&$0$&$1$&$2$&$3$\\ \hline
$0$&$A$&$B$&$C$&$D$\\ \hline
$1$&$E$&$E$&$D$&$B$\\ \hline
$2$&$A$&$E$&$A$&$E$\\ \hline
$3$&$E$&$D$&$B$&$E$\\ \hline
\end{tabular}
\end{table}
In Table~\ref{table 2},
\begin{equation}
\begin{array}{l}
A=\frac{1}{16}(q-7+2s),\\
B=\frac{1}{16}(q+1+2s-4t),\\
C=\frac{1}{16}(q+1-6s),\\
D=\frac{1}{16}(q+1+2s+4t),\\
E=\frac{1}{16}(q-3-2s).
\end{array}
\end{equation}

We treat $D=P-N$ and $-D$ as the same type of SDS here. 

\begin{table}
\caption{Fourth-order cyclotomic SDSs}\label{table 3}
\begin{tabular}{|c|c|c|c|c|}
\hline
 $P\cup N$ & $P$  & $k$ & $\lambda$ & \textup{Conditions}\\
 \hline
 \multirow{4}*{$G$} & $G-0_G$  & $q$ & $q-4$  &always hold\\ \cline{2-5}
 & $G-C_i-0_G$  & $q$ & $(q-9)/4$  &$q=4t^2+9$, $t$ is odd \\ \cline{2-5}
 & $C_i+C_j$  & $-$ & $-$  &$-$ \\ \cline{2-5}
 & $C_l$  & $q$ & $(q-1)/4$  &$q=4t^2+1$, $t$ is odd\\ \hline
 \multirow{2}*{$G-0_G$} & $G-C_i-0_G$ & $-$ & $-$  &$-$ \\ \cline{2-5}
 &\multirow{3}*{$C_i+C_j$} & \multirow{3}*{$q-1$} & \multirow{3}*{$-1$}  &$i-j\equiv 2\pmod{4}$ \\ \cline{5-5}
 &&&&$i-j\equiv 1,3\pmod{4}$ \\
 &&&&$p\equiv-1\mod4$\\\hline
 \multirow{13}*{$G-C_j$} & \multirow{6}*{$C_i$}  & \multirow{6}*{$3f+1$} & \multirow{6}*{$(f-1)/4$} & $f\equiv 1\pmod{4}$, $i-j\equiv 1\pmod{4}$,\\
 &&&& $4t+3s=3$\\ \cline{5-5}
 &&&&$f\equiv 1\pmod{4}$, $i-j\equiv 2\pmod{4}$,\\
 &&&& $s=9$\\ \cline{5-5}
 &&&&$f\equiv 1\pmod{4}$, $i-j\equiv 3\pmod{4}$,\\
 &&&& $3s-4t=3$\\\cline{2-5}
 &\multirow{6}*{$G-C_i-C_j-0_G$}  & \multirow{6}*{$3f+1$} & \multirow{6}*{$(f-5)/4$} & $f\equiv 1\pmod{4}$, $i-j\equiv 1\pmod{4}$,\\
 &&&&$3s+4t=-5$\\ \cline{5-5}
 &&&&$f\equiv 1\pmod{4}$, $i-j\equiv 2\pmod{4}$,\\ 
 &&&&$s=-15$\\\cline{5-5}
 &&&&$f\equiv 1\pmod{4}$, $i-j\equiv 3\pmod{4}$,\\ 
 &&&&$4t-3s=5$\\\cline{2-5}
 & $G-C_j-0_G$  & $3f+1$ & $(9f-9)/4$ & $f\equiv 1\pmod {4}$, $s=-7$\\ \hline
 \multirow{6}*{$G-C_j-0_G$} & \multirow{6}*{$C_i$} & \multirow{6}*{$3f$} & \multirow{6}*{$(f-3)/4$} & $f\equiv 3\pmod{4}$, $i-j\equiv1\pmod{4}$\\ 
 &&&&$3s+4t=-1$\\ \cline{5-5}
 &&&&$f\equiv 3\pmod{4}$, $i-j\equiv2\pmod{4}$\\
 &&&&$s=-3$\\ \cline{5-5}
  &&&&$f\equiv 3\pmod{4}$, $i-j\equiv3\pmod{4}$\\
 &&&&$3s-4t=-1$\\ \hline
 $C_i+0_G$ & $C_i$  & $f+1$ & $(f-3)/4$ & $f\equiv 3\pmod{4}$, $s=5$\\ \hline
 $C_i+C_j+0_G$ & $C_i$  & $-$ & $-$ & $-$\\ \hline
 $C_i+C_j$ & $C_i$  & $-$ & $-$ & $-$\\ 
 \hline
 \end{tabular}
 \end{table}

In Table \ref{table 3}, we have listed all the forms, parameters, and conditions required for the existence of fourth-order cyclotomic SDSs. All SDSs in this table, except for the $(q,q,q-4)$-SDS, need to satisfy $q\equiv 1\mod 4$. In the sequel of this section, we will analyze each case presented in Table \ref{table 3} and provide  detailed proofs for the first two cases. And for Cases 3-6, their proofs are tedious verification similar to Cases 1-2, we leave them to readers.
\subsection{Case 1: $P\cup N=G$}
In this case, we have $N=G-P$ and 
\begin{equation}\label{Eqn_Case1D}
DD^{-1}=(|G|-4|P|)G+4PP^{-1}=(|G|-4|N|)G+4NN^{-1}.
\end{equation}
Then $D$ is an SDS if and only if $P$ or $N$ is a DS. Now we focus on the case where $P$ or $N$ is a cyclotomic DS. By the necessary and sufficient conditions for $C_0$ to be a difference set in \cite{Lehmer}, Balmaceda  collected the following results in \cite[Theorem 2.6]{Jose}. 

\begin{lemma}\label{lem_C_0}
Let $\bF_q$ be the finite field of order $q$, where $q$ is a power of an odd prime $p$. Let $C_0$ be the $4$-th order cyclotomic class defined in Eq. ~\ref{Eqn_C0}. 
\begin{itemize}
\item[1)] $C_0$ is a cyclotomic DS in $G=(\bF_q,+)$ with parameters $(q,(q-1)/4,(q-5)/16)$ if and only if $q=4t^2+1$ and $t$ is odd.
\item[2)] $C_0+0_G$ is a cyclotomic DS in $G=(\bF_q,+)$ with parameters $(q,(q+3)/4),(q+3)/16)$ if and only if $q=4t^2+9$ and $t$ is odd.
\end{itemize}
\end{lemma}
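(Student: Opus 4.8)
The plan is to translate the statement ``$C_0$ (respectively $C_0+0_G$) is a difference set'' into equalities among fourth-order cyclotomic numbers, and then to read those numbers off from Theorem~\ref{thm_cctable}. The first step is a group-ring computation within the Schur ring spanned by $C_0,C_1,C_2,C_3,0_G$. Since $-1=w^{(q-1)/2}=w^{2f}$, one has $-C_0=C_0$ when $f$ is even and $-C_0=C_2$ when $f$ is odd, so $C_0^{(-1)}$ equals $C_0$ or $C_2$ accordingly. Using the multiplication $C_iC_j=\sum_{k}(j-i,k)C_{k+i}+|C_0\cap -C_{j-i}|0_G$, the coefficient of an element of $C_m$ in $C_0C_0^{(-1)}$ is a single cyclotomic number, while the coefficient of $0_G$ is $|C_0|=f$. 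For $f$ odd this coefficient is $(2,m)$, and by Table~\ref{table 2} the four coefficients on $C_0,C_1,C_2,C_3$ are $A,E,A,E$; hence $C_0$ is a difference set iff $A=E$. For the second statement I would expand
\[
(C_0+0_G)(C_0+0_G)^{(-1)}=C_0C_0^{(-1)}+C_0+C_0^{(-1)}+0_G,
\]
so that for $f$ odd the coefficient on $C_m$ picks up an extra $1$ exactly when $m\in\{0,2\}$, giving coefficients $A+1,E,A+1,E$; hence $C_0+0_G$ is a difference set iff $A+1=E$.

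Next I would solve these linear conditions using the explicit formulas. First I would check that the $f$-even case cannot yield a difference set: by Table~\ref{table 1} the relevant coefficients are $A,B,C,D$ (with $A$ shifted in the second statement), and forcing them equal gives $t=0$ together with $s\equiv 3\pmod 4$, contradicting the normalization $s\equiv1\pmod 4$ of Theorem~\ref{thm_cctable}. Equivalently, $p\equiv3\pmod 4$ always forces $q\equiv1\pmod 8$ and hence $f$ even, so a difference set forces $f$ odd and $p\equiv1\pmod 4$, which is exactly the regime where $q=s^2+t^2$ is available. For $f$ odd, the condition $A=E$ reads $q-7+2s=q-3-2s$, i.e. $s=1$, while $A+1=E$ reads $s=-3$; both satisfy $s\equiv1\pmod 4$.

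Finally I would translate the $s$-conditions back into conditions on $q$ and compute $\lambda$. Writing $q=s^2+t^2$, the case $s=1$ gives $q=1+t^2$; since $q$ is odd, $t$ is even, say $t=2t'$, whence $q=4t'^2+1$, and $f=t'^2$ is odd exactly when $t'$ is odd. By the uniqueness of the decomposition subject to $s\equiv1\pmod4$ and $p\nmid s$ asserted in Theorem~\ref{thm_cctable}, the converse direction holds as well, so $C_0$ is a difference set iff $q=4t'^2+1$ with $t'$ odd, and then $\lambda=A=(q-5)/16$. The case $s=-3$ gives $q=9+t^2$, hence $q=4t'^2+9$ with $t'$ odd, and $\lambda=E=(q+3)/16$. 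I expect the only real obstacle to be careful bookkeeping: tracking the parity of $f$, which controls $C_0^{(-1)}$ and therefore which table applies, and reconciling the table parameter $t$ with the lemma's parameter, the two differing by the factor $2$ forced by $q\equiv1\pmod4$.
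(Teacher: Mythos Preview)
The paper does not prove Lemma~\ref{lem_C_0} at all: it is quoted from the literature (Lehmer; collected in \cite[Theorem~2.6]{Jose}) without argument. Your proposal therefore does something strictly more than the paper, supplying a self-contained proof from Theorem~\ref{thm_cctable}. The computation is correct: for $f$ odd the coefficients of $C_0C_0^{(-1)}$ on $C_0,\dots,C_3$ are $(2,0),\dots,(2,3)=A,E,A,E$, so $C_0$ is a difference set iff $A=E$, i.e.\ $s=1$; adjoining $0_G$ shifts the $C_0,C_2$ coefficients by $1$, giving $s=-3$. Translating via $q=s^2+t^2$ and the parity of $f=(q-1)/4$ yields the stated forms $q=4t'^2+1$ and $q=4t'^2+9$ with $t'$ odd, and the $\lambda$-values follow.

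One point to tighten: your dismissal of the $f$-even case invokes the normalization $s\equiv 1\pmod 4$ ``of Theorem~\ref{thm_cctable}'', but that theorem states this congruence only under the hypothesis $p\equiv 1\pmod 4$. The sentence beginning ``Equivalently'' does not actually close the gap for $p\equiv 3\pmod 4$, since it only shows such $p$ force $f$ even, not that the $f$-even equations are then unsolvable. The fix is immediate: when $p\equiv 3\pmod 4$ one has $s=(-p)^{n/2}$ with $-p\equiv 1\pmod 4$, hence $s\equiv 1\pmod 4$ in this case as well, so your congruence contradiction applies uniformly. With that one line added, the argument is complete.
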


\begin{proposition}
Let $q\equiv 1\pmod {4}$ be a prime power and $q=4f+1$. Let $\bF_q$ be a finite field of order $q$ and $G=(\bF_q,+)$. 
\begin{itemize}
    \item[(1)] When $P=G-0_G$ and $N=0_G$, $D=P-N$ is a $(q,q,q-4)$-SDS.
    \item[(2)] When $P=G-C_l-0_G$ and $N=C_l+0_G$ with $l\in\{0,1,2,3\}$, $D=P-N$ is a $(q,q,\frac{q-9}{4})$-SDS if and only if $q=4t^2+9$ and $t$ is odd.
    \item[(3)] When $P=C_i+C_j$ and $N=C_k+C_l+0_G$ with $\{i,j,k,l\}=\{0,1,2,3\}$, $D=P-N$ is not an SDS.
    \item[(4)] When $P=C_l$ and $N=G-C_l$ with $l\in\{0,1,2,3\}$, $D=P-N$ is a $(q,q,\frac{q-1}{4})$-SDS if and only if $q=4t^2+1$ and $t$ is odd.  
\end{itemize} 
\begin{proof}
(1) Let $P=G-0_G$ and $N=G-P$, we compute that 
\[
PP^{-1}=(G-0_G)(G-0_G)^{-1}=(|G|-2)G+0_G.
\]
Then $P$ is a DS with parameters $(v,v-1,v-2)$ and $D$ is a $(v,v,v-4)$-SDS.

(2) Let $P=G-C_l-0_G$ and $N=G-P$ with $l\in\{0,1,2,3\}$. Note that $N$ is a DS if and only if $q=4t^2+9$ with $t$ being odd by Lemma \ref{lem_C_0}. In this case, $N$ has parameters $(q,(q+3)/4,(q+3)/16)$. By Eq. ~\ref{Eqn_Case1D}, we compute that 
\[
\begin{aligned}
DD^{-1}&=(|G|-4|N|)G+4NN^{-1}\\
&=(|G|-4|N|)G+4(\frac{q+3}{16}(G-0_G)+\frac{q+3}{4}0_G)\\
&=\frac{q-9}{4}(G-0_G)+q0_G.
\end{aligned}
\]
Hence $D$ is a $(q,q,\frac{q-9}{4})$-SDS.

(3) Let $P=C_i+C_j$ and $N=G-P$ with $\{i,j\}\subset\{0,1,2,3\}$ and $i\neq j$, we compute that 
\begin{equation}\label{Eqn_1.3}
\begin{aligned}
(C_i+C_j)(C_i+C_j)^{-1}&=C_iC_i^{-1}+C_iC_j^{-1}+C_jC_i^{-1}+C_jC_j^{-1}\\
=&\sum_{k=0}^{e-1}\left[(\frac{q-1}{2},k)+(j-i+\frac{q-1}{2},k)\right]C_{k+i}\\
&+\sum_{k=0}^{e-1}\left[(\frac{q-1}{2},k)+(i-j+\frac{q-1}{2},k)\right]C_{k+j}+2|C_0|0_G\\
=&\sum_{k=0}^{e-1}\left[(\frac{q-1}{2},k)+(j-i+\frac{q-1}{2},k)\right.\\
&\left.+(\frac{q-1}{2},k+i-j)+(i-j+\frac{q-1}{2},k+i-j)\right]C_{k+i}+2|C_0|0_G.
\end{aligned}
\end{equation}

We note that $q=4f+1$,
\begin{equation}
\frac{q-1}{2}=2f\equiv\begin{cases}2&\textup{if }f \textup{ is odd}\\0&\textup{if }f \textup{ is even}\end{cases}\pmod{4}.
\end{equation}

If $f$ is odd,  Eq. \ref{Eqn_1.3} equals 
\[
\sum_{k=0}^{e-1}[(2,k)+(j-i+2,k)+(2,k+i-j)+(i-j+2,k+i-j)]C_{k+i}+2|C_0|0_G.
\]
We are primarily concerned with the coefficients of all $C_{k+i}$ and ensure that the coefficients of $C_{k+i}$ are equal to each other for any $0\leq k\leq e-1$. Next, we will discuss the cases where $j-i$ takes on the values of $1,2$ and $3$ respectively.
\begin{itemize}
    \item[1)] When $j-i=1,3$, the coefficient of $C_{k+i}$ equals to $A_k=(3,k)+(2,k)+(1,k-1)+(2,k-1)$. By Table \ref{table 2}, we have $P$ is a DS if and only if the elements in $\{A_k:\ 0\leq k\leq e-1\}$ are equal to each other, where 
    \[
    \begin{aligned}
    A_k=\begin{cases}
    E+A+B+E & \textup{if }k=0,\\
    D+E+E+A & \textup{if }k=1,\\
    B+A+E+E & \textup{if }k=2,\\
    E+E+D+A & \textup{if }k=3.
    \end{cases}
    \end{aligned}
    \]
    It implies that $t=0$. By the definition of $t$ in Lemma \ref{thm_cctable}, we have $p\equiv -1\pmod{4}$ and $n$ is an even number. In this case, $f$ must be even, which contradicts the assumption. 
    \item[2)] When $j-i=2$, the coefficient of $C_{k+i}$ equals to $A_k=(0,k)+(2,k)+(0,k-2)+(2,k-2)$. By Table \ref{table 2}, we have $P$ is a DS if and only if the elements in $\{A_k:\ 0\leq k\leq e-1\}$ are equal to each other, where
    \[
    \begin{aligned}
    A_k=\begin{cases}
    A+A+C+A & \textup{if }k=0,\\
    B+E+D+E & \textup{if }k=1,\\
    C+A+A+A & \textup{if }k=2,\\
    D+E+B+E & \textup{if }k=3.
    \end{cases}
    \end{aligned}
    \]
    This is equivalent to $3A+C=B+D+2E$ which implies that $-20=-4$. Obviously, we get a contradiction.  
\end{itemize}

If $f$ is even, Eq. \ref{Eqn_1.3} equals 
\[
\sum_{k=0}^{e-1}[(0,k)+(j-i,k)+(0,k+i-j)+(i-j,k+i-j)]C_{k+i}+2|C_0|0_G.
\]
By  similar arguments, the difference set $P$ does not exist.

(4) Let $P=C_l$ and $N=G-P$ with $l\in\{0,1,2,3\}$. Note that $P$ is a DS if and only if $q=4t^2+1$ with $t$ being odd by Lemma \ref{lem_C_0}. In this case, $P$ has parameters $(q,(q-1)/4,(q-5)/16)$. By Eq. \ref{Eqn_Case1D}, we compute that 
\[
\begin{aligned}
DD^{-1}&=(|G|-4|P|)G+4PP^{-1}\\
&=(|G|-4|P|)G+4(\frac{q-5}{16}(G-0_G)+\frac{q-1}{4}0_G)\\
&=\frac{q-1}{4}(G-0_G)+q0_G.
\end{aligned}
\]
Hence $D$ is a $(q,q,\frac{q-1}{4})$-SDS.
\end{proof}
\end{proposition}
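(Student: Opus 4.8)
The plan is to reduce all four parts to a single structural observation and then dispatch each case either by a known difference-set criterion or by a cyclotomic-number computation. First I would use the hypothesis $P\cup N=G$: since $P$ and $N$ are disjoint and cover $G$, we have $N=G-P$ as sets and hence $D=P-N=2P-G$. Using the elementary group-ring identities $PG=GP^{-1}=|P|G$ and $GG=|G|G$, a direct expansion of $DD^{-1}=(2P-G)(2P^{-1}-G)$ yields the master identity $DD^{-1}=(|G|-4|P|)G+4PP^{-1}$, which is Eq.~\ref{Eqn_Case1D}. From this I would extract the key equivalence: every non-identity coefficient of $DD^{-1}$ is constant exactly when every non-identity coefficient of $PP^{-1}$ is, so $D$ is an SDS if and only if $P$ is a $(q,|P|,\lambda_P)$-DS, in which case $\lambda=|G|-4|P|+4\lambda_P$ while $k=|P|+|N|=q$. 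Because the complement of a difference set is again a difference set, $P$ is a DS if and only if $N=G-P$ is, so in each part it suffices to test whichever of $P,N$ is a single coset, a union of two cosets, or such a set together with $0_G$.

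Second, for parts (1), (2) and (4) this equivalence turns the problem into recognizing cyclotomic difference sets. Part (1) is immediate: $P=G-0_G$ gives $PP^{-1}=(q-2)G+0_G$, so $P$ is a $(q,q-1,q-2)$-DS and the master identity returns $\lambda=q-4$. For parts (2) and (4) I would invoke Lemma~\ref{lem_C_0}, which says $C_l+0_G$ (respectively $C_l$) is a cyclotomic DS precisely when $q=4t^2+9$ (respectively $q=4t^2+1$) with $t$ odd, together with the stated parameters. Substituting those parameters into $DD^{-1}=(|G|-4|N|)G+4NN^{-1}$ for part (2) and into the $P$-version for part (4) produces $\lambda=\frac{q-9}{4}$ and $\lambda=\frac{q-1}{4}$ respectively, and the ``only if'' directions are inherited directly from the ``only if'' of Lemma~\ref{lem_C_0}.

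Third, part (3) is the real content and the main obstacle. Here I must show that $P=C_i+C_j$ is never a difference set; by the complement remark this simultaneously rules out $N=C_k+C_l+0_G=G-P$, so $D$ is never an SDS. I would expand $(C_i+C_j)(C_i+C_j)^{-1}$ via the Schur-ring multiplication rule, writing $C_l^{-1}=-C_l=C_{l+(q-1)/2}$ (indices mod $4$) and collecting the coefficient of each coset $C_{k+i}$ as a sum of four cyclotomic numbers of the form $(\,\cdot\,,k)$. The difference-set condition forces these four-fold sums to be equal across all $k$. The delicate point is that the shift $(q-1)/2=2f$ reduces to $2$ or $0\pmod 4$ according to the parity of $f$, so the argument splits into the cases $f$ odd and $f$ even, and further according to $i-j\equiv 1,2,3\pmod 4$. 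In each branch I would substitute the entries of Tables~\ref{table 1}--\ref{table 2} and verify that equality of the coefficients is impossible — either because it forces $t=0$, which by the definition of $t$ in Theorem~\ref{thm_cctable} requires $p\equiv-1\pmod 4$ and $f$ even (contradicting $f$ odd), or because it collapses to a numerical absurdity such as $3A+C=B+D+2E$ reducing to $-20=-4$.

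The hard part will be purely the bookkeeping in part (3): correctly tracking which coset each $-C_j$ and each product $C_iC_j^{-1}$ lands in as the parity of $f$ and the residue $i-j\pmod 4$ vary, and then reading off the correct table entries. Once the case split is organized, the remaining algebra is routine verification, which is why it is reasonable to present parts (1), (2), (4) in full and to treat the analogous branches of (3) (and the remaining Cases 3--6 of Table~\ref{table 3}) as mechanical.
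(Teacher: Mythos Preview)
Your proposal is correct and follows essentially the same approach as the paper's proof: both derive the master identity $DD^{-1}=(|G|-4|P|)G+4PP^{-1}$, reduce the SDS condition to $P$ (or equivalently $N$) being a difference set, dispatch parts (1), (2), (4) via direct computation and Lemma~\ref{lem_C_0}, and handle part (3) by expanding $(C_i+C_j)(C_i+C_j)^{-1}$ in cyclotomic numbers, splitting on the parity of $f$ and the residue $i-j\pmod 4$, and reading off contradictions from Tables~\ref{table 1}--\ref{table 2} (including the same $3A+C=B+D+2E\Rightarrow -20=-4$ absurdity). Your write-up makes the ``$D$ is an SDS iff $P$ is a DS'' equivalence and the complement observation more explicit than the paper does, but the underlying argument is identical.
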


\subsection{Case 2: $P\cup N=G-0_G$} In this case either $P=C_i+C_j+C_k=G-C_l-0_G$ or $P=C_i+C_j=G-C_k-C_l-0_G$ with $\{i,j,k,l\}=\{0,1,2,3\}$. Note that when $i-j\equiv 2\pmod{4}$, $P$ is either the squares or nonsquares set.  By Lemma \ref{sds_pds} and Example \ref{exm_Paley}, we can see that $D=P-N$ is an SDS with parameters $(q,q-1,-1)$.

\begin{proposition}
Let $p$ be an odd prime and $q=p^n\equiv 1\pmod{4}$. Let $\bF_q$ be a finite field of order $q$ and $G=(\bF_q,+)$.
\begin{itemize}
    \item[(1)] When $P=G-C_l-0$ and $N=C_l$ with $l\in\{0,1,2,3\}$, $D=P-N$ is not an SDS.
    \item[(2)] When $P=C_i+C_j$ and $N=C_k+C_l$ with $\{i,j,k,l\}=\{0,1,2,3\}$, $D=P-N$ is a $(q,q-1,-1)$-SDS if and only if $i-j\equiv 2\pmod{4}$ or $i-j\equiv 1,3\pmod{4}$ and $p\equiv -1\pmod{4}$.
\end{itemize}
\begin{proof}
(1) If $P=G-C_l-0$ and $N=C_l$, then we compute that 
\[
\begin{aligned}
(G-2C_l-0)(G-2C_l-0)^{-1}=(|G|-4|C_l|-2)G+4C_lC_l^{-1}+2C_l+2C_l^{-1}+0_G.
\end{aligned}
\]
The cyclotomic class part equals 
\begin{equation}\label{Eqn_2.1}
2C_lC_l^{-1}+C_l+C_l^{-1}=\sum_{k=0}^{e-1}2(\frac{q-1}{2},k)C_{k+l}+C_l+C_{l+\frac{q-1}{2}}+2|C_0|0_G.
\end{equation}

If $f$ is odd, 
\[
\begin{aligned}
\sum_{k=0}^{e-1}2(2,k)C_{k+l}+C_l+C_{l+2}=[2(2,0)+1]C_l+2(2,1)C_{l+1}+[2(2,2)+1]C_{l+2}+2(2,3)C_{l+3}.
\end{aligned}
\]
Since $D$ is an SDS if and only if the coefficients of the above equation with respect to $C_{l+k}$, $0\leq k\leq 3$, are all equal. By Table \ref{table 2}, this implies that $s=-1$. By the definition of $s$ in Theorem \ref{thm_cctable}, it is not possible. Then $f$ must be even and 
\[
\begin{aligned}
\sum_{k=0}^{e-1}2(0,k)C_{k+l}+2C_l=[2(0,0)+2]C_l+2(0,1)C_{l+1}+2(0,2)C_{l+2}+2(0,3)C_{l+3}.
\end{aligned}
\]
By Table \ref{table 1}, $D$ is an SDS if and only if $t=0$ and $s=3$. In this case, $q=t^2+s^2=9$ and $f=1$ which implies a contradiction. 

(2) If $P=C_i+C_j$ and $N=C_k+C_l$ with $\{i,j,k,l\}=\{0,1,2,3\}$, then we compute that 
\begin{equation}\label{Eqn_2.2}
\begin{aligned}
DD^{-1}=&(G-2C_i-2C_j-0_G)(G-2C_i-2C_j-0_G)^{-1}\\
=&(|G|-4|C_i|-4|C_j|-2)G+4C_iC_i^{-1}+4C_iC_j^{-1}+4C_jC_i^{-1}+4C_jC_j^{-1}\\
&+2C_i+2C_j+2C_i^{-1}+2C_j^{-1}+0_G.
\end{aligned}
\end{equation}
In the beginning of this section, we have already discussed the case when $i-j\equiv 2\pmod{4}$. Now we only need to consider the case when $i-j\equiv 1,3\pmod{4}$.

If $f$ is odd and $i-j=1,3$, the cyclotomic class part of Eq. \ref{Eqn_2.2} equals 
\[
\begin{aligned}
&2(C_iC_i^{-1}+C_jC_j^{-1}+C_iC_j^{-1}+C_jC_i^{-1})+C_i+C_j+C_i^{-1}+C_j^{-1}\\
&=2\sum_{k=0}^{e-1}[(2,k)+(j-i+2,k)+(2,k+i-j)\\
&+(i-j+2,k+i-j)]C_{k+i}+C_i+C_j+C_{i+2}+C_{j+2}+2|C_0|0_G,
\end{aligned}
\]
The coefficient of $C_{k+i}$ equals  
\[
       2[(2,k)+(1,k)+(2,k+1)+(3,k+1)]
\]
    and 
\[
       C_i+C_j+C_{i+2}+C_{j+2}=C_i+C_{i+3}+C_{i+2}+C_{i+1}.
\]
By Table \ref{table 2}, $D$ is an SDS if and only if the coefficients of Eq. ~\ref{Eqn_2.2} with respect to $C_{l+k}$, $0\leq k\leq 3$, are all equal. By Table \ref{table 2}, we get $B=D$ which implies that $t=0$. By the definition of $t$ in Lemma \ref{thm_cctable}, we have $p\equiv -1\pmod{4}$ and $n$ is even. In this case, $f$ must be even, which contradicts the assumption.

If $f$ is even and $i-j=1,3$, we use the same method and obtain $t=0$. By the definition of $t$ in Lemma \ref{thm_cctable}, we have $p\equiv -1\pmod{4}$.

So $D$ is an SDS if and only if $i-j\equiv 2\pmod{4}$ or $i-j\equiv 1,3\pmod{4}$ and $p\equiv -1\pmod{4}$. Since $k=|P|+|N|=q-1$ and $|P|-|N|=0$, we get $\lambda=-1$ by Eq. ~\ref{ness1}. 
\end{proof}
\end{proposition}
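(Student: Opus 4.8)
The plan is to carry out everything inside the Schur ring spanned by $C_0,C_1,C_2,C_3$ and $0_G$, using the criterion that $D$ is an SDS precisely when $DD^{-1}=\lambda G+n\,0_G$. Since the coefficient of $0_G$ in $DD^{-1}$ always equals $k=\sum_i s_i^2$, this reduces to a single requirement: the coefficients of $C_0,C_1,C_2,C_3$ in $DD^{-1}$ must all be equal. The structural fact driving the whole computation is that $-1=w^{(q-1)/2}$ with $(q-1)/2=2f$, so inversion sends $C_l$ to $C_{l+2}$ when $f$ is odd and fixes $C_l$ when $f$ is even; accordingly the argument splits on the parity of $f$, reading the cyclotomic numbers $(i,j)$ from Table~\ref{table 2} when $f$ is odd and from Table~\ref{table 1} when $f$ is even.

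For part (1) I would write $D=G-2C_l-0_G$ and expand $DD^{-1}=(q-4f-2)G+4C_lC_l^{-1}+2C_l+2C_l^{-1}+0_G$, reducing the problem to balancing the four $C$-coefficients of the tail $4C_lC_l^{-1}+2C_l+2C_l^{-1}$. Expanding $C_lC_l^{-1}=\sum_{k}(2f,k)C_{k+l}+f\,0_G$ and adding the two linear terms, I would tabulate the coefficients of $C_l,C_{l+1},C_{l+2},C_{l+3}$. In the odd-$f$ case the entries $(2,k)$ from Table~\ref{table 2} make the balance condition collapse to $s=-1$, incompatible with the normalization of $s$ in Theorem~\ref{thm_cctable}; in the even-$f$ case the entries $(0,k)$ from Table~\ref{table 1} force $t=0$ together with a value of $s$ inconsistent with $q=s^2+t^2$. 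Either way no $l$ gives an SDS.

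For part (2), the case $i-j\equiv2\pmod4$ is exactly the one in which $P=C_i+C_j$ is the set of squares or of nonsquares, hence a Paley PDS, so Lemma~\ref{sds_pds} and Example~\ref{exm_Paley} already deliver a $(q,q-1,-1)$-SDS. For $i-j\equiv1,3\pmod4$ I would expand $DD^{-1}$ for $D=G-2C_i-2C_j-0_G$ and observe that the linear part $2(C_i+C_j+C_i^{-1}+C_j^{-1})$ covers each of the four classes exactly once, hence contributes equally and drops out of the balance condition; what remains is the quadratic part, whose coefficient of $C_{k+i}$ is $2[(2,k)+(1,k)+(2,k+1)+(3,k+1)]$. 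Reading these from Table~\ref{table 2} (and analogously from Table~\ref{table 1} for even $f$), I would find that all four coefficients coincide if and only if $B=D$, i.e.\ if and only if $t=0$. The decisive arithmetic step is that $t=0$ holds exactly when $p\equiv-1\pmod4$, which forces $n$ even and hence $f$ even: this kills the odd-$f$ branch outright and, in the even-$f$ branch, pins the SDS condition to $p\equiv-1\pmod4$. Finally, from $|P|=|N|=2f$ one gets $|P|-|N|=0$ and $k=q-1$, so Eq.~\eqref{ness1} gives $\lambda=-1$.

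I expect the main obstacle to be bookkeeping rather than conceptual: tracking the two parities of $f$, the relabelling of classes induced by inversion, and the correct extraction and comparison of the four table-driven coefficients. The only genuinely non-mechanical point is the equivalence $t=0\iff p\equiv-1\pmod4$ and its interaction with the parity of $f$, which is what turns an unwieldy cyclotomic identity into the clean arithmetic criterion stated in the proposition.
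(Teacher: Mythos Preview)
Your outline follows the paper's proof almost exactly: expand $DD^{-1}$ in the Schur ring, split on the parity of $f$, read off the cyclotomic numbers from the appropriate table, and reduce the balance condition to constraints on $s,t$. The handling of part~(1) and the $i-j\equiv 2\pmod 4$ subcase of part~(2) are spot on.

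There is one genuine slip in part~(2). Your claim that the linear term $2(C_i+C_j+C_i^{-1}+C_j^{-1})$ ``covers each of the four classes exactly once'' and therefore drops out is valid only when $f$ is odd, since then $C_l^{-1}=C_{l+2}$ and the four summands are indeed the four distinct classes. When $f$ is even, inversion fixes each $C_l$, so the linear term is $4C_i+4C_j$ and hits only two of the classes. This is not a harmless simplification: if you omit the linear contribution in the even-$f$ case, the balance equations fail even under $t=0$ (comparing, say, the $C_i$ and $C_{i+2}$ coefficients yields an inconsistency of the form $A+B+D=C+2E$, which is never satisfied). The extra $+4$ landing on exactly the $C_i$ and $C_j$ coefficients is what makes the system consistent and leaves $t=0$ as the sole requirement. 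So in the even-$f$ branch you must keep the linear term in the bookkeeping; once you do, the conclusion $t=0\iff p\equiv-1\pmod 4$ goes through as you describe, and the rest of your argument is correct.
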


\subsection{Case 3: $P\cup N=G-C_j$} In this case we have the following three situations.

(1) If $P=C_i$ and $N=G-C_i-C_j$ with $\{i,j\}\subset \{0,1,2,3\}$ and $i\neq j$, then $k=|P|+|N|=3f+1$ and $|P|-|N|=-f-1$. By Eq. ~\ref{ness1}, we get $\lambda=\frac{f-1}{4}$. 

(2) If $P=G-C_i-C_j-0_G$ and $N=C_i+0_G$ with $\{i,j\}\subset \{0,1,2,3\}$ and $i\neq j$, then $k=|P|+|N|=3f+1$ and $|P|-|N|=f-1$. By Eq. ~\ref{ness1}, we get $\lambda=\frac{f-5}{4}$.

(3) If $P=G-C_i-0_G$ and $N=0$ with $i\in \{0,1,2,3\}$, then $k=|P|+|N|=3f+1$ and $|P|-|N|=3f-1$. By Eq. ~\ref{ness1}, we get $\lambda=\frac{9f-9}{4}$. 

In the above three cases, we have $f\equiv 1\pmod{4}$ since $\lambda$ is an integer.

\begin{proposition}
Let $p$ be an odd prime, $q=p^n\equiv 1\pmod{4}$ and $q=4f+1$. Let $G=(\bF_q,+)$ and $w$ be a generator of $\bF_q^*$. If $p\equiv -1\pmod{4}$, let $s=(-p)^{n/2}$ and $t=0$. If $p\equiv1\pmod{4}$, let $s$ be uniquely determined by $q=s^2+t^2$, $p\nmid s$, $s\equiv1\pmod{4}$, and  $t$ be uniquely determined by $w^{(q-1)/4}\equiv s/t\pmod{p}$. 
\begin{itemize}
    \item[(1)] When $P=C_i$ and $N=G-C_i-C_j$ with $\{i,j\}\subset \{0,1,2,3\}$ and $i\neq j$, $D=P-N$ is a $(q,3f+1,\frac{f-1}{4})$ if and only if $f\equiv 1\pmod{4}$ and $s,t$ satisfy one of the conditions: 
    \begin{itemize}
        \item[1)]$i-j\equiv 1\pmod{4}$, $4t+3s=3$; 
        \item[2)]$i-j\equiv 2\pmod{4}$, $s=9$; 
        \item[3)]$i-j\equiv 3\pmod{4}$, $3s-4t=3$.
    \end{itemize}
    \item[(2)] When $P=G-C_i-C_j-0_G$ and $N=C_i+0_G$ with $\{i,j\}\subset \{0,1,2,3\}$ and $i\neq j$, then $D=P-N$ is a $(q,3f+1,\frac{f-5}{4})$ if and only if $f\equiv 1\pmod{4}$ and $s,t$ satisfy one of the conditions: 
    \begin{itemize}
        \item[1)]$i-j\equiv 1\pmod{4}$, $3s+4t=-5$;
        \item[2)]$i-j\equiv 2\pmod{4}$, $s=-15$;
        \item[3)]$i-j\equiv 3\pmod{4}$, $4t-3s=5$.
    \end{itemize}
    \item[(3)] When $P=G-C_i-0_G$ and $N=0_G$ with $i\in \{0,1,2,3\}$, then $D=P-N$ is a $(q,3f+1,\frac{9f-9}{4})$ if and only if $f\equiv 1\pmod{4}$ and $s=-7$. 
\end{itemize}
\end{proposition}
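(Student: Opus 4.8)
The plan is to treat configurations (1)--(3) uniformly by the Schur-ring computation already used in Cases 1 and 2. First I would record the standing reductions. Since each target value $\lambda\in\{\tfrac{f-1}{4},\tfrac{f-5}{4},\tfrac{9f-9}{4}\}$ must be an integer, we have $f\equiv 1\pmod 4$; in particular $f$ is odd, so the cyclotomic numbers are read from Table~\ref{table 2}, and because $\tfrac{q-1}{2}=2f\equiv 2\pmod 4$ we have $-1=w^{(q-1)/2}\in C_2$, hence $C_l^{-1}=C_{l+2}$ with indices taken modulo $4$. Next I would write each signed set explicitly in the Schur ring using $N=(G-C_j)-P$: this gives $D=2C_i+C_j-G$ in case (1), $D=G-2C_i-C_j-2\cdot 0_G$ in case (2), and $D=G-C_i-2\cdot 0_G$ in case (3) (here $N=\{0_G\}$, so that $k=|P|+|N|=3f+1$ as stated).

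The second step is to expand $DD^{-1}$ using the rule $C_aC_b=\sum_k(b-a,k)C_{k+a}+|C_0\cap -C_{b-a}|\,0_G$ together with $G^{-1}=G$ and $0_G^{-1}=0_G$. The key observation is that every product containing a factor $G$ (that is, $G\cdot G=qG$, $C_a\cdot G=fG$, $G\cdot 0_G=G$) and the pure term $0_G\cdot 0_G$ contribute the \emph{same} amount to the coefficient of every class $C_0,\dots,C_3$. Since $DD^{-1}$ lies in the Schur ring it has the form $\sum_k a_kC_k+a_\infty 0_G$, and by Lemma~\ref{lem_charSDS} (equivalently, by the defining identity $DD^{-1}=\lambda G+n\cdot 0_G$) the set $D$ is an SDS precisely when $a_0=a_1=a_2=a_3$. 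Moreover $DD^{-1}$ is inversion-invariant and inversion fixes $0_G$ while sending $C_k\mapsto C_{k+2}$, so $a_k=a_{k+2}$ holds automatically; thus the whole SDS condition reduces to the single equation $a_0=a_1$. Discarding the uniform $G$- and $0_G$-terms, the relevant coefficients come from the cyclotomic part $4C_iC_i^{-1}+2C_iC_j^{-1}+2C_jC_i^{-1}+C_jC_j^{-1}$ in case (1), the same cyclotomic part plus the linear terms $4C_i+2C_j+4C_{i+2}+2C_{j+2}$ produced by the cross-products with $0_G$ in case (2), and $C_iC_i^{-1}+2C_i+2C_{i+2}$ in case (3).

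Writing $A_k$ for the coefficient of $C_{i+k}$, the multiplication rule gives, in case (1),
\[
A_k=4(2,k)+2(j-i+2,k)+2(i-j+2,k+i-j)+(2,k+i-j),
\]
with analogous expressions (augmented by the explicit linear terms) in cases (2) and (3). I would then split according to $i-j\equiv 1,2,3\pmod 4$, substitute the entries $A,B,C,D,E$ of Table~\ref{table 2}, and simplify the one surviving equation $A_0=A_1$; after the $q$-terms cancel it is linear in $s,t$. For instance, case (1) with $i-j\equiv 2$ reduces to $7A+2C=5E+2B+2D$, i.e.\ $s=9$, while $i-j\equiv 1$ reduces to $3(A-E)+2(D-B)=0$, i.e.\ $4t+3s=3$; case (3) reduces to $A+2=E$, i.e.\ $s=-7$; and the additional integer constants contributed by the $0_G$ cross-products in case (2) shift these equalities to $3s+4t=-5$ and $s=-15$. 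The residues $i-j\equiv 3$ follow from $i-j\equiv 1$ by the substitution $t\mapsto-t$ (which swaps the entries $B$ and $D$ of Table~\ref{table 2}), yielding $3s-4t=3$ and $4t-3s=5$. Finally, the values of $k$ and $\lambda$ are exactly those computed from Eq.~\ref{ness1} in the discussion opening Case~3, and checking the $0_G$-coefficient against $n=k-\lambda$ confirms that each condition is sufficient as well as necessary.

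The main obstacle is purely organizational: correctly tracking the four indices modulo $4$ through the Schur-ring reindexing, so that each $A_k$ picks up the right cyclotomic-number symbol from Table~\ref{table 2}, and keeping the extra linear contributions of the identity term $0_G$ separate in cases (2) and (3). Once the symbolic coefficients are identified, reducing to the stated linear conditions $4t+3s=3$, $s=9$, $3s-4t=3$, $3s+4t=-5$, $s=-15$, $4t-3s=5$, and $s=-7$ is routine algebra.
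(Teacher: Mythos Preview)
Your proposal is correct and follows exactly the approach the paper intends: the paper explicitly states that Cases~3--6 are ``tedious verification similar to Cases 1--2'' and omits the details, and your plan carries out precisely that Schur-ring expansion of $DD^{-1}$ with the coefficients read from Table~\ref{table 2}. Your two streamlining observations---that inversion-invariance of $DD^{-1}$ forces $a_k=a_{k+2}$ automatically, so only $a_0=a_1$ needs checking, and that the cases $i-j\equiv 1,3\pmod 4$ are exchanged by $t\mapsto -t$---are not used in the paper's worked Cases~1--2 but are harmless efficiencies, not a different method.
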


\subsection{Case 4: $P\cup N=C_i+C_j+C_k$} 
Without loss of generality, assume that $P=C_i$ and $N=G-C_i-C_j-0_G$ with $\{i,j\}\in \{0,1,2,3\}$ and $i\neq j$, then $k=|P|+|N|=3f$ and $|P|-|N|=-f$. By Eq. ~\ref{ness1}, we get $\lambda=\frac{f-3}{4}$. Note that $f\equiv 3\pmod{4}$ since $\lambda$ is an integer. 

\begin{proposition}
Let $p$ be an odd prime, $q=p^n\equiv 1\pmod{4}$ and $q=4f+1$. Let $G=(\bF_q,+)$ and $w$ be a generator of $\bF_q^*$. If $p\equiv -1\pmod{4}$, let $s=(-p)^{n/2}$ and $t=0$. If $p\equiv1\pmod{4}$, let $s$ be uniquely determined by $q=s^2+t^2$, $p\nmid s$, $s\equiv1\pmod{4}$, and  $t$ be uniquely determined by $w^{(q-1)/4}\equiv s/t\mod{p}$. If $P=C_i$ and $N=G-C_i-C_j-0_G$ with $\{i,j\}\in \{0,1,2,3\}$ and $i\neq j$, then $D=P-N$ is a $(q,3f,\frac{f-3}{4})$ if and only if $f\equiv 3\pmod{4}$ and $s,t$ satisfy one of the conditions: 
\begin{itemize}
\item[1)] $i-j\equiv 1\pmod{4}$, $3s+4t=-1$;
\item[2)] $i-j\equiv 2\pmod{4}$, $s=-3$; 
\item[3)] $i-j\equiv 3\pmod{4}$, $3s-4t=-1$.
\end{itemize}
\end{proposition}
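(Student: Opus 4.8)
The plan is to follow the computational template of Cases~1 and~2 verbatim: compute $DD^{-1}$ in the group ring $\bZ[G]$, reduce the condition ``$D$ is an SDS'' to the equality of the four coefficients of $C_0,C_1,C_2,C_3$ in $DD^{-1}$, and then read off those coefficients from Table~\ref{table 2} to obtain linear conditions on $s$ and $t$. First I would record the parameters. Since $P=C_i$ and $N=G-C_i-C_j-0_G$, we have $|P|=f$ and $|N|=2f$, so $k=3f$ and $|P|-|N|=-f$; Eq.~\ref{ness1} forces $\lambda=(f^2-3f)/(4f)=(f-3)/4$, and integrality of $\lambda$ gives $f\equiv 3\pmod 4$, in particular $f$ odd. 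Writing $D=2C_i+C_j-G+0_G$ (equivalently working with $-D=G-2C_i-C_j-0_G$, which has the same $DD^{-1}$), I would expand using $GX=\sigma(X)G$ and $0_GX=X$, where $\sigma(X)$ is the sum of coefficients of $X$. Collecting the $G$-term, whose coefficient is $q-6f-2=-(2f+1)$, leaves the cyclotomic part
\[
4C_iC_i^{-1}+2C_iC_j^{-1}+2C_jC_i^{-1}+C_jC_j^{-1}+2C_i+2C_i^{-1}+C_j+C_j^{-1}+0_G.
\]

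Next, because $f$ is odd we have $C_a^{-1}=C_{a+2}$, so every product above is of the form $C_aC_b=\sum_{k=0}^{3}(b-a,k)C_{k+a}+\varepsilon\,0_G$ and can be rewritten as an integer combination of $C_0,C_1,C_2,C_3$ plus a multiple of $0_G$. Since $DD^{-1}$ is supported on $C_0,C_1,C_2,C_3,0_G,G$ and $G=C_0+C_1+C_2+C_3+0_G$, by Lemma~\ref{lem_charSDS} (or directly from the definition) $D$ is an SDS if and only if the four coefficients of $C_0,\ldots,C_3$ are all equal; the common value is then $\lambda$, and the $0_G$-coefficient is automatically $k$. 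I would read these four coefficients off Table~\ref{table 2} (the $f$-odd table) as explicit sums of $A,B,C,D,E$, exactly as in the Case~1 and Case~2 computations, and then substitute $A=\tfrac1{16}(q-7+2s)$, and the analogous formulas for $B,C,D,E$.

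Then I would split into the three subcases $i-j\equiv 1,2,3\pmod 4$. The ``all-equal'' system collapses to a single linear relation in $s,t$ in each case, and I expect to exploit the symmetry that interchanging $i$ and $j$ sends $i-j$ to its negative modulo $4$ and, at the level of Table~\ref{table 2}, swaps $B=\tfrac1{16}(q+1+2s-4t)$ with $D=\tfrac1{16}(q+1+2s+4t)$, i.e.\ acts by $t\mapsto-t$. This immediately pairs the case $i-j\equiv1$ (giving $3s+4t=-1$) with the case $i-j\equiv3$ (giving $3s-4t=-1$), so only one of them needs a full computation. The middle case $i-j\equiv2$, in which $C_j=C_i^{-1}$, is $t$-free and yields $s=-3$. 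In every case the common coefficient equals $(f-3)/4$, confirming the parameters $(q,3f,(f-3)/4)$.

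The hard part will be the pure bookkeeping of the second and third steps: tracking the two index shifts (the $+2$ from each inversion and the base shift $k+a$ in each product) and correctly assigning the Table~\ref{table 2} entry to each of the four coefficient slots in each subcase. This is the ``tedious verification'' the authors refer to; the only conceptual inputs are the product formula for cyclotomic classes and the closed forms of the quartic cyclotomic numbers from Theorem~\ref{thm_cctable}. Since the whole argument is an equivalence, the same computation delivers both directions of the ``if and only if'' at once.
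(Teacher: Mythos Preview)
Your approach is exactly what the paper intends: it explicitly states that Cases~3--6 are ``tedious verification similar to Cases~1--2'' and leaves them to the reader, so the template computation you outline \emph{is} the paper's proof.

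One small correction worth flagging: the symmetry you invoke to pair $i-j\equiv 1$ with $i-j\equiv 3$ is not literally ``interchanging $i$ and $j$'', because $C_i$ and $C_j$ play asymmetric roles in $D=2C_i+C_j-G+0_G$ (swapping them gives a genuinely different signed set). The correct symmetry is to replace the primitive root $w$ by $w^{-1}$: this sends $C_a\mapsto C_{-a}$ (so, after normalizing $i=0$, it swaps $j=1\leftrightarrow j=3$), and since $(w^{-1})^{(q-1)/4}=-w^{(q-1)/4}$ it sends $t\mapsto -t$ in Theorem~\ref{thm_cctable}, which is precisely the $B\leftrightarrow D$ swap you want. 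With that adjustment (or by simply computing both subcases directly) your plan goes through.
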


\subsection{Case 5: $P\cup N=C_i+0_G$} Without loss of generality, assume that $P=C_i$ and $N=0_G$, then we have $k=|P|+|N|=f+1$ and $|P|-|N|=f-1$. By Eq. ~\ref{ness1}, we get $\lambda=\frac{f-3}{4}$. Note that $f\equiv 3\pmod{4}$ since $\lambda$ is an integer.

\begin{proposition}
Let $p$ be an odd prime, $q=p^n\equiv 1\pmod{4}$ and $q=4f+1$. Let $G=(\bF_q,+)$ and $w$ be a generator of $\bF_q^*$. If $p\equiv -1\pmod{4}$, let $s=(-p)^{n/2}$ and $t=0$. If $p\equiv1\pmod{4}$, let $s$ be uniquely determined by $q=s^2+t^2$, $p\nmid s$, $s\equiv1\pmod{4}$, and $t$ be uniquely determined by $w^{(q-1)/4}\equiv s/t\mod{p}$. If $P=C_i$ and $N=0$ with $i\in \{0,1,2,3\}$, then $D=P-N$ is a $(q,q-1,\frac{f-3}{4})$ if and only if $f\equiv 3\pmod{4}$ and $s=5$.
\end{proposition}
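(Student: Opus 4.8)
The plan is to compute $DD^{-1}$ directly in the group ring and collapse the SDS condition to a single equation between fourth-order cyclotomic numbers, in the same spirit as Cases~1--2. First I would record that, by Eq.~\ref{ness1}, the integrality of $\lambda=\frac{f-3}{4}$ already forces $f\equiv 3\pmod 4$; in particular $f$ is odd, so that $-1=w^{(q-1)/2}=w^{2f}$ lies in $C_2$, giving $C_i^{-1}=-C_i=C_{i+2}$ with indices read modulo $4$, and the relevant cyclotomic numbers are those of Table~\ref{table 2}. (I would also note in passing that $f$ odd forces $p\equiv 1\pmod 4$, so $s,t$ really are the integers determined in Theorem~\ref{thm_cctable}.) With $P=C_i$ and $N=\{0_G\}$ we have $D=C_i-0_G$ and $D^{(-1)}=C_{i+2}-0_G$, hence
\[
DD^{-1}=C_iC_i^{-1}-C_i-C_i^{-1}+0_G .
\]

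Next I would expand $C_iC_i^{-1}=C_iC_{i+2}$ by the Schur-ring multiplication rule stated at the start of this section. Here $j-i=2$ and, since $f$ is odd, $-C_2=C_0$, so the identity contribution is $|C_0\cap -C_2|=|C_0|=f$; thus $C_iC_{i+2}=\sum_{k=0}^{3}(2,k)C_{k+i}+f\,0_G$. Substituting back gives
\[
DD^{-1}=\big[(2,0)-1\big]C_i+(2,1)C_{i+1}+\big[(2,2)-1\big]C_{i+2}+(2,3)C_{i+3}+(f+1)\,0_G .
\]
The identity coefficient is $f+1=|P|+|N|=k$, which equals $\lambda+n$ automatically, so by the definition of an SDS (equivalently Lemma~\ref{lem_charSDS}) it only remains to force the four coefficients of $C_i,C_{i+1},C_{i+2},C_{i+3}$ to coincide. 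Reading the row $(2,k)$ of Table~\ref{table 2} yields $(2,0)=(2,2)=A$ and $(2,1)=(2,3)=E$, so these coefficients are $A-1,\,E,\,A-1,\,E$, and the entire SDS condition collapses to the single equation $A-1=E$, uniformly in the class index $i$.

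Finally I would substitute $A=\frac{1}{16}(q-7+2s)$ and $E=\frac{1}{16}(q-3-2s)$ into $A-1=E$; this is linear in $s$ (and free of $t$) and solves to $s=5$. For the converse direction I would check that when $s=5$ the common coefficient value is $E=\frac{1}{16}(q-13)=\frac{f-3}{4}=\lambda$, using $q=4f+1$, so that $D$ is a $(q,f+1,\frac{f-3}{4})$-SDS; combined with the integrality requirement $f\equiv 3\pmod 4$ this gives exactly the claimed equivalence. There is no genuine obstacle in this argument: it is routine verification of the same kind as the detailed Cases~1--2, and the only points demanding care are the reduction $C_i^{-1}=C_{i+2}$, the value $f$ of the identity coefficient in $C_iC_{i+2}$, and the observation that the resulting condition is independent of both $t$ and the choice of cyclotomic class $C_i$.
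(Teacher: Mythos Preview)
Your argument is correct and is exactly the routine verification the paper intends: it explicitly says Cases~3--6 are ``tedious verification similar to Cases 1--2'' and leaves them to the reader, so your direct group-ring computation of $DD^{-1}=C_iC_{i+2}-C_i-C_{i+2}+0_G$, followed by reading off the row $(2,k)$ of Table~\ref{table 2} to get the single equation $A-1=E$, is precisely the method the paper has in mind. You also correctly caught that the parameter $k$ in the statement should be $f+1$ (as in Table~\ref{table 3} and the paragraph preceding the proposition), not $q-1$.
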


\subsection{Case 6: $P\cup N=C_i+C_j+0_G \textup{ or } C_i+C_j$ } For this case $P=C_i$ and $N=C_j+0_G$ or $N=C_j$, both of them will lead to $\lambda=-1/2$, which is a contradiction.

\section{Conclusion}\label{sec_con}
In this paper, we give three constructions for SDSs which are based on PDSs, product methods and cyclotomic classes, respectively. The former two cases provide the infinite classes with parameters $\lambda=-1,1$ and $161$, and the later case provides the infinite classes with parameter $\lambda$ almost equal to $\frac{cv}{4}$ with $c=0,1,4,9$. This fills the gaps in the literature on their existence of these infinite classes.

\section{Acknowledgements}
Gennian Ge was supported by the National Key Research and Development Program of China under Grant 2020YFA0712100 and Grant 2018YFA0704703, the National Natural Science Foundation of China under Grant 11971325 and Grant 12231014, and Beijing Scholars Program. Tingting Chen was supported by the Zhejiang Provincial Natural Science Foundation of China under Grant LQ23A010015.

\bibliographystyle{IEEEtran}
\bibliography{sds}
\end{document}